\newcommand{\PG}{\proj_{T_{\vsol}^{G}}}
\newcommand{\PGs}{\proj_{S_{\vsol}^{G}}}
\newcommand{\PJ}{\proj_{T_{\xsol}^{J}}}
\newcommand{\PJs}{\proj_{S_{\xsol}^{J}}}
\newcommand{\BG}{W_G}
\newcommand{\CJ}{W_J}
\newcommand{\dder}{\mathrm{D}}
\newcommand{\bPG}{\proj_{\bmT_{\bmx^\star}^{\bmG}}}
\newcommand{\bPJ}{\proj_{\bmT_{\bmx^\star}^{\bmJ}}}
\begin{document}

\title{Activity Identification and Local Linear Convergence of Douglas--Rachford/ADMM under Partial Smoothness}
\author{
	Jingwei Liang\thanks{Jingwei Liang, Jalal Fadili: GREYC, CNRS-ENSICAEN-Universit\'{e} de Caen, 
	E-mail: \{Jingwei.Liang, Jalal.Fadili\}@ensicaen.fr}
	, Jalal Fadili\samethanks 
	, Gabriel Peyr\'{e}\thanks{Gabriel Peyr\'{e}: CNRS, CEREMADE, Universit\'{e} Paris-Dauphine, 
	E-mail: Gabriel.Peyre@ceremade.dauphine.fr}
	, Russell Luke\thanks{Russell Luke: Institut f\"{u}r Numerische und Angewandte Mathematik Universit\"{a}t G\"{o}ttingen, 
	E-mail: r.luke@math.uni-goettingen.de}
}
\date{}
\maketitle

\begin{abstract}
Convex optimization has become ubiquitous in most quantitative disciplines of science, including variational image processing. Proximal splitting algorithms are becoming popular to solve such structured convex optimization problems. Within this class of algorithms, Douglas--Rachford (DR) and alternating direction method of multipliers (ADMM) are designed to minimize the sum of two proper lower semi-continuous convex functions whose proximity operators are easy to compute. The goal of this work is to understand the local convergence behaviour of DR (resp. ADMM) when the involved functions (resp. their Legendre-Fenchel conjugates) are moreover partly smooth. More precisely, when both of the two functions (resp. their conjugates) are partly smooth relative to their respective manifolds, we show that DR (resp. ADMM) identifies these manifolds in finite time. Moreover, when these manifolds are affine or linear, we prove that DR/ADMM is locally linearly convergent. When $J$ and $G$ are locally polyhedral, we show that the optimal convergence radius is given in terms of the cosine of the Friedrichs angle between the tangent spaces of the identified manifolds. This is illustrated by several concrete examples and supported by numerical experiments.
\end{abstract}

{\bf Key words.}{ Douglas--Rachford splitting, ADMM, Partial Smoothness, Finite Activity Identification, Local Linear Convergence.}


\section{Introduction}
\label{sec:intro}

\subsection{Problem formulation}
In this work, we consider the problem of solving
\begin{equation}\label{eq:minP}
\min_{x \in \RR^n} J(x) + G(x)  ,
\end{equation}
where both $J$ and $G$ are in $\Gamma_0(\RR^n)$, the class of proper, lower semi-continuous (lsc) and convex functions. We assume that $\ri\Pa{\dom(J)} \cap \ri\Pa{\dom(G)} \neq \emptyset$, where $\ri(C)$ is the relative interior of the nonempty convex set $C$, and $\dom(F)$ is the domain of the function $F$. We also assume that the set of minimizers is non-empty, and that these two functions are simple, meaning that their respective proximity operators, $\prox_{\gamma J}$ and $\prox_{\gamma G}$, $\gamma > 0$, are easy to compute, either exactly or up to a very good approximation. Problem~\eqref{eq:minP} covers a large number of problems including those appearing in variational image processing (see Section \ref{sec:DRapp}).

An efficient and provably convergent algorithm to solve this class of problems is the Douglas--Rachford splitting method \cite{lions1979splitting}, which reads, in its relaxed form,
\begin{equation}\label{eq:drsgen}
\left\{
\begin{aligned}
\vkp &= \prox_{\gamma G}\pa{2\xk - \zk}  , \\
\zkp &= (1-{\lambda_k}) \zk + {\lambda_k}\pa{\zk + \vkp - \xk}  , \\
\xkp &= \prox_{\gamma J} \zkp  ,
\end{aligned}
\right.
\end{equation}
for $\gamma > 0$, $\lambda_k \in ]0,2]$ with $\sum_{k \in \NN}\lambda_k(2-\lambda_k)=+\infty$.
The fixed-point operator $\BDR$ with respect to $\zk$ takes the form
\begin{align*}
\BDR &\eqdef \qfrac12\pa{\rprox_{\gamma G}\circ\rprox_{\gamma J} + \Id}, \\
\rprox_{\gamma J} &\eqdef 2\prox_{\gamma J} - \Id, ~~
\rprox_{\gamma G} \eqdef 2\prox_{\gamma G} - \Id  .
\end{align*}
For a proper lsc convex function $J$, the proximity operator is defined as, for $\gamma>0$,
\[
\prox_{\gamma J}(z) = \argmin_{x\in\RR^n} \gamma J(x)  + \qfrac{1}{2}\norm{x-z}^2 .
\]
Since the set of minimizers of \eqref{eq:minP} is assumed to be non-empty, so is the $\Fix(\BDR)$ since the former is nothing but $\prox_{\gamma J}\Pa{\Fix(\BDR)}$. See \cite{bauschke2011convex} for a more detailed account on DR in real Hilbert spaces.

\begin{rem}
The DR algorithm is not symmetric w.r.t. the order of the functions $J$ and $G$. Nevertheless, the convergence claims above hold true of course when the order of $J$ and $G$ is reversed in \eqref{eq:drsgen}. In turn, all of our statements throughout also extend to this case with minor adaptations. Note also that the standard DR only accounts for the sum of $2$ functions. But extension to more than $2$ functions is straightforward through a product space trick, see Section~\ref{sec:DRmfunc} for details.
\end{rem}

\subsection{Contributions}
Based on the assumption that both $J$ and $G$ are partly smooth relative to smooth manifolds, we show that DR identifies in finite time these manifolds. In plain words, this means that after a finite number of iterations, the iterates $(\xk, \vk)$ lie respectively in the partial smoothness (acyive) manifolds associated to $J$ and $G$ respectively. When these manifolds are affine/linear, we establish local linear convergence of DR. 
Moreover, when both $G$ and $J$ are locally polyhedral, we show that the optimal convergence rate is given in terms of the cosine of the Friedrichs angle between the tangent spaces of the manifolds. We also generalize these claims to the minimization of the sum of more than two functions. We finally exemplify our results with several experiments on variational signal and image processing. 

It is important to note that our results readily apply to ADMM, since it is well-known that ADMM is the DR method applied to the Fenchel dual problem of~\eqref{eq:minP}. More precisely, we only need to assume that the conjugates $J^*$ and $G^*$ are partly smooth. 
Therefore, to avoid unnecessary lengthy repetitions, we only focus in detail on the primal DR splitting method.

\subsection{Relation to prior work}
There are problem instances in the literature where DR was proved to converge locally linearly. For instance, in \cite[Proposition~4]{lions1979splitting}, it was assumed that the "internal" function is strongly convex with a Lipschitz continuous gradient. This local linear convergence result was further investigated in \cite{davis2014convergence,giselsson2014metric} under smoothness and strong convexity assumptions.
On the other hand, for the Basis Pursuit (BP) problem, i.e. $\ell_1$ minimization with an affine constraint, is considered in \cite{DemanetZhang13} and an eventual local linear convergence is shown in the absence of strong convexity. The author in \cite{boley2013local} analyzes the local convergence behaviour of ADMM for quadratic or linear programs, and shows local linear convergence if the optimal solution is unique and the strict complementarity holds. This turns out to be a special case of our framework. For the case of two subspaces, linear convergence of DR with the optimal rate being the cosine of the Friedrichs angle between the subspaces is proved in~\cite{BauschkeDR14}. Our results generalize those of~\cite{DemanetZhang13,boley2013local,BauschkeDR14} to a much larger class of problems.
For the non-convex case, \cite{BorweinSims10} considered DR method for a feasibility problem of a sphere intersecting a line or more generally a proper affine subset. Such feasibility problems with an affine subspace and a super-regular set (in the sense of \cite{LukeMAP09}) with strongly regular intersection was considered in \cite{HesseLuke13}, and was generalized later to two $(\epsilon, \delta)$-regular sets with linearly regular intersection \cite{hesse2013nonconvex}, see also \cite{Phan14} for an even more general setting. However, even in the convex case, the rate provided in \cite{Phan14} is nowhere near the optimal rate given by the Friedrichs angle.
	

\subsection{Notations}
For a nonempty convex set $C \subset \RR^n$, denote $\Aff(C)$ its affine hull, and $\LinHull(C)$ the subspace parallel to $\Aff(C)$. Denote $\proj_C$ the orthogonal projection operator onto $C$ and $N_C$ its normal cone. For $J \in \Gamma_0(\RR^n)$, denote $\partial J$ its subdifferential and $\prox_{\gamma J}$ its proximity operator with $\gamma > 0$. Define the model subspace
\eq{
	T_{x} \eqdef \LinHull\Pa{\partial J(x)}^\perp .
}
It is obvious that $\proj_{T_x}\Pa{\partial J(x)}$ is a singleton, and therefore defined as
\begin{equation}\label{eq:e_x}
e_x \eqdef \proj_{T_x}\Pa{\partial J(x)} .
\end{equation}
%
Suppose $\Mm \subset \RR^n$ is a $C^2$-manifold around $x$, denote $\tgtManif{x}{\Mm}$ the tangent space of $\Mm$ at $x \in \RR^n$.

\section{Partly Smooth Functions}
\label{sec:psf}

\subsection{Definition and main properties}
Partial smoothness of functions was originally defined in \cite{Lewis-PartlySmooth}, our definition hereafter specializes it to the case of proper lsc convex functions.

\begin{defn}[Partly smooth function]\label{dfn-partly-smooth}
Let $J \in \Gamma_0(\RR^n)$, and $x\in\RR^n$ such that $\partial J(x) \neq \emptyset$.
$J$ is \emph{partly smooth} at $x$ relative to a set $\Mm$ containing $x$ if 
\begin{enumerate}[label={\rm (\arabic{*})}]
\item\label{PS-C2}\emph{(Smoothness)} 
$\Mm$ is a $C^2$-manifold around $x$, $J|_\Mm$ is $C^2$ near $x$;
\item\label{PS-Sharp}\emph{(Sharpness)} 
The tangent space $\tgtManif{x}{\Mm}$ is $T_{x}$;
\item\label{PS-DiffCont}\emph{(Continuity)} 
The set--valued mapping $\partial J$ is continuous at $x$ relative to $\Mm$.
\end{enumerate}
The class of partly smooth functions at $x$ relative to $\Mm$ is denoted as $\PSF{x}{\Mm}$. When $\Mm$ is either affine or linear, $\Mm = x + T_x$, and we denote this subclass as $\PSFAL{x}{T_x}$.
\end{defn}

Capitalizing on the results of \cite{Lewis-PartlySmooth}, it can be shown that, under mild transversality conditions, the set of lsc convex and partly smooth functions is closed under addition and pre-composition by a linear operator. Moreover, absolutely permutation-invariant convex and partly smooth functions of the singular values of a real matrix, i.e. spectral functions, are convex and partly smooth spectral functions of the matrix \cite{Daniilidis-SpectralIdent}. 

Examples of partly smooth functions that have become very popular recently in the signal processing, optimization, statistics and machine learning literature are $\ell_1$, $\ell_{1,2}$, $\ell_\infty$, total variation (TV) and nuclear norm regularizations. In fact, the nuclear norm is partly smooth at a matrix $x$ relative to the manifold $\Mm = \enscond{x'}{ \rank(x')=\rank(x)}$. The first four regularizers are all part of the class $\PSFAL{x}{T_x}$. 

We now define a subclass of partly smooth functions where the manifold is affine or linear and the vector $e_x$  \eqref{eq:e_x} is locally constant.
\begin{defn}\label{dtf-polyhedral}
$J$ belongs to the class $\PSFLS{x}{T_x}$ if and only if $J \in \PSFAL{x}{T_x}$ and $e_x$ is constant near $x$, i.e. there exists a neighbourhood $\Nn$ of $x$ such that $\forall x' \in \pa{x+T_x} \cap \Nn$,
\[
e_{x'} = e_x .
\]
\end{defn}
The class of functions that conform with this definition is that of locally polyhedral functions \cite[Section~6.5]{vaiter2013model}, which includes for instance the $\ell_1$, $\ell_\infty$ norms and the anisotropic TV semi-norm that are widely used in signal and image processing, computer vision, machine learning and statistics. The indicator function of a polyhedral set is also in $\PSFALS{x}{T_x}$ at each $x$ in the relative interior of one of its faces relative to the affine hull of that face, i.e. $x+T_x=\Aff(\text{Face of $x$})$. Observe that for polyhedral functions, in fact, the subdifferential itself is constant along $x+T_x$.
 
\subsection{Proximity operator}
This part shows that the proximity operator of a partly smooth function can be given in an implicit form. 

\begin{prop}\label{prop:proxps}
Let $p \eqdef \prox_{\gamma \J}(x) \in \Mm$. Assume that $J \in \PSF{p}{\Mm}$. Then for any point $x$ near $p$, we have
\begin{equation*}
p = \proj_{\Mm}(x) - \gamma\e_{p} + o\pa{\norm{x-p}} .
\end{equation*}
In particular, if $J \in \PSFAL{p}{T_p}$, then for any $x \in \RR^n$, we have
\begin{equation*}
p = \proj_{p+T_p}(x) - \gamma\e_{p} .
\end{equation*}
\end{prop}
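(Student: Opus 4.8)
The plan is to start from the first-order optimality condition for the proximal point and project it onto the model subspace $T_p$. Since $p = \prox_{\gamma J}(x)$, Fermat's rule gives $0 \in \gamma\partial J(p) + (p-x)$, that is $\frac{x-p}{\gamma} \in \partial J(p)$. By definition $T_p = \LinHull\pa{\partial J(p)}^\perp$, and $\e_p = \proj_{T_p}\pa{\partial J(p)}$ is a singleton (as recalled in the Notations): any two subgradients differ by a vector of $\LinHull\pa{\partial J(p)} \subseteq T_p^\perp$, which $\proj_{T_p}$ annihilates. Applying the linear map $\proj_{T_p}$ to the inclusion $\frac{x-p}{\gamma} \in \partial J(p)$ thus yields the exact identity $\proj_{T_p}(x-p) = \gamma\e_p$.

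Second, I would settle the affine/linear case, where the conclusion is immediate and exact. If $J \in \PSFAL{p}{T_p}$ then $\Mm = p+T_p$, and the projection onto an affine subspace splits as $\proj_{p+T_p}(x) = p + \proj_{T_p}(x-p)$. Combined with the identity above this gives $\proj_{p+T_p}(x) = p + \gamma\e_p$, i.e. $p = \proj_{p+T_p}(x) - \gamma\e_p$, which is the second displayed claim with no remainder term.

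Third, for a general $C^2$ manifold I would compare the metric projection onto $\Mm$ with that onto its tangent affine space $p + T_p$. The sharpness property $\tgtManif{p}{\Mm} = T_p$ says precisely that $\Mm$ is first-order tangent to $p+T_p$ at $p$. Representing $\Mm$ locally as a $C^2$ graph $u \mapsto p + u + \phi(u)$ over $T_p$, with $\phi(0)=0$ and $\dder\phi(0)=0$, and minimizing $\norm{x - \pa{p+u+\phi(u)}}^2$ over $u \in T_p$, one finds that the minimizer obeys $u^\star = \proj_{T_p}(x-p) + o\pa{\norm{x-p}}$, hence $\proj_\Mm(x) = \proj_{p+T_p}(x) + o\pa{\norm{x-p}}$ as $x \to p$ (the projection onto $\Mm$ being well defined for $x$ near $p \in \Mm$). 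Substituting the affine identity $\proj_{p+T_p}(x) = p + \gamma\e_p$ then gives $p = \proj_\Mm(x) - \gamma\e_p + o\pa{\norm{x-p}}$, the first claim.

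The delicate step is the third one: justifying the first-order agreement $\proj_\Mm(x) = \proj_{p+T_p}(x) + o\pa{\norm{x-p}}$. Here $\norm{\proj_{T_p}(x-p)} \le \norm{x-p}$, so the tangential displacement is $O\pa{\norm{x-p}}$, while the curvature term $\phi$ enters only at second order, $\norm{\phi(u^\star)} = O\pa{\norm{u^\star}^2}$ and $\norm{\dder\phi(u^\star)} = O\pa{\norm{u^\star}}$; one must check that these quadratic contributions do not spoil the $o\pa{\norm{x-p}}$ remainder when the stationarity equation for $u^\star$ is solved. This is a standard consequence of the $C^2$ regularity of $\Mm$ together with the tubular neighbourhood theorem, and it is the single place where smoothness of the manifold, rather than pure convex analysis, is genuinely needed.
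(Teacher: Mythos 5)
Your proof is correct and takes essentially the same route as the paper: both project the proximal inclusion $x - p \in \gamma\partial J(p)$ onto $T_p$ to obtain the exact identity $\proj_{T_p}(x-p) = \gamma e_p$, handle the affine/linear case via $\proj_{p+T_p}(x) = p + \proj_{T_p}(x-p)$, and reduce the general case to the first-order expansion $\proj_{\Mm}(x) = p + \proj_{T_p}(x-p) + o\pa{\norm{x-p}}$. The only difference is that the paper imports this last expansion wholesale as Lemma~\ref{lemma:proj_M} (citing a companion work), whereas you sketch its proof directly via a local $C^2$ graph parametrization of $\Mm$ over $T_p$ using sharpness and the tubular neighbourhood theorem --- a legitimate and slightly more self-contained justification of the same ingredient.
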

\begin{proof}
We start with the following lemma whose proof can be found in \cite{LiangFB14}.
\begin{lem}\label{lemma:proj_M}
Suppose that $J \in \PSF{p}{\Mm}$. Then any point $x$ near $p$ has a unique projection $\proj_{\Mm}(x)$, $\proj_{\Mm}$ is $C^1$ around $p$, and thus
\[
\proj_{\Mm}(x) - p = \proj_{T_p}(x-p) + o\pa{\norm{x-p}} .
\]
\end{lem}

Let's now turn to the proof of our proposition. We have the equivalent characterization
\begin{equation}
\label{eq:proxmoninc}
p = \prox_{\gamma \J}(x) \iff x - p \in \gamma \partial \J(p) .
\end{equation}
Projecting \eqref{eq:proxmoninc} on $T_p$ and using Lemma~\ref{lemma:proj_M}, we get
\[
\proj_{T_p}(x-p) = \proj_{\Mm}(x) - p + o\pa{\norm{x-p}} = \gamma \e_{p}  ,
\]
which is the desired result.

When $J \in \PSFAL{p}{T_p}$, observe that $\proj_{p+T_p}(x)=p+\proj_{T_p}(x-p)$ for any $x \in \RR^n$. Thus projecting again the monotone inclusion \eqref{eq:proxmoninc} on $T_p$, we get
\[
\proj_{T_p}(x-p) = \proj_{p+T_p}(x) - p = \gamma \e_{p} ,
\]
whence the claim follows. The linear case is immediate since $p+T_p = T_p$.
\end{proof}

\section{Activity Identification of Douglas--Rachford}
\label{sec:DRident}

In this section, we present the finite time activity identification of the DR method.

\begin{thm}[Finite activity identification]\label{thm:genident}
Suppose that the DR scheme \eqref{eq:drsgen} is used to create a sequence $(\zk,\xk,\vk)$. Then $(\zk,\xk,\vk)$ converges to $(\zsol,\xsol,\xsol)$, where $\zsol \in \Fix(\BDR)$ and $\xsol$ is a global minimizer of \eqref{eq:minP}. Assume that $J \in \PSF{\xsol}{\MmJ}$ and $G \in \PSF{\vsol}{\MmG}$, and
\begin{equation}\label{eq:conditionnondeg}
\zsol \in \xsol + \gamma\Pa{ \ri\Pa{\partial J(\xsol)}\cap\ri\Pa{- \partial G(\vsol)}}  .
\end{equation} 
Then, 
\begin{enumerate}[label={\rm (\arabic{*})}]
\item The DR scheme has the finite activity identification property, i.e. for all $k$ sufficiently large, $(\xk,\vk) \in \MmJ \times \MmG$.
\item If $G \in \PSFAL{\vsol}{\TG{\vsol}}$, then $\vk \in \vsol+\TG{\vsol}$, and $\TG{\vk}=\TG{\vsol}$ for all $k$ sufficiently large.
\item If $J \in \PSFAL{\xsol}{\TJ{\xsol}}$ , then $\xk \in \xsol+\TJ{\xsol}$, and $\TJ{\xk}=\TJ{\xsol}$ for all $k$ sufficiently large.
\end{enumerate}
\end{thm}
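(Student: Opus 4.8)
The plan is to couple the (classical) convergence of the Douglas--Rachford iterates with a finite identification theorem for partly smooth functions, the relative-interior condition \eqref{eq:conditionnondeg} being exactly the trigger for identification. The convergence itself is standard: $\BDR$ is firmly nonexpansive with $\Fix(\BDR)\neq\emptyset$, so $\zk\to\zsol\in\Fix(\BDR)$, and nonexpansiveness of $\prox_{\gamma J},\prox_{\gamma G}$ propagates this to $\xk\to\xsol=\prox_{\gamma J}(\zsol)$ and $\vk\to\vsol=\prox_{\gamma G}(2\xsol-\zsol)$, with $\vsol=\xsol$ forced by the fixed-point equation and $\xsol$ minimizing \eqref{eq:minP}. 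The substantive work is the identification.

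Next I would read subgradients off the two proximal updates. By the monotone characterization \eqref{eq:proxmoninc},
\begin{equation*}
g^J_k \eqdef \gamma^{-1}\Pa{\zk-\xk} \in \partial J(\xk), \qquad g^G_k \eqdef \gamma^{-1}\Pa{2x_{k-1}-z_{k-1}-\vk} \in \partial G(\vk) .
\end{equation*}
Passing to the limit and using $\vsol=\xsol$ gives $g^J_k\to\gamma^{-1}(\zsol-\xsol)$ and $g^G_k\to\gamma^{-1}(\xsol-\zsol)$. The crucial point is that \eqref{eq:conditionnondeg} says precisely $\gamma^{-1}(\zsol-\xsol)\in\ri\Pa{\partial J(\xsol)}$ and $\gamma^{-1}(\xsol-\zsol)\in\ri\Pa{\partial G(\vsol)}$, so each subgradient sequence converges to a relative-interior subgradient of the corresponding function at its limit.

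The core is then the finite activity identification theorem for partly smooth functions (Hare--Lewis): if $J\in\PSF{\xsol}{\MmJ}$, $\xk\to\xsol$, $J(\xk)\to J(\xsol)$, and $g^J_k\in\partial J(\xk)$ converges to a point of $\ri\Pa{\partial J(\xsol)}$, then $\xk\in\MmJ$ for all large $k$. The one hypothesis not yet in hand, the value convergence $J(\xk)\to J(\xsol)$, is automatic in the convex case: the subgradient inequality $J(\xsol)\ge J(\xk)+\iprod{g^J_k}{\xsol-\xk}$ together with boundedness of $g^J_k$ and $\xk\to\xsol$ forces $\limsup_k J(\xk)\le J(\xsol)$, while lower semicontinuity gives the reverse. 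Running the identical argument for $G$ at $\vsol$ gives $\vk\in\MmG$ for large $k$, which is assertion (1).

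Claims (2) and (3) then follow from the affine/linear structure: when $G\in\PSFAL{\vsol}{\TG{\vsol}}$ we have $\MmG=\vsol+\TG{\vsol}$, so $\vk\in\MmG$ already reads $\vk\in\vsol+\TG{\vsol}$, and for the tangent-space equality I would use the local stability of partial smoothness along the manifold so that, by the sharpness property, the model subspace at $\vk$ equals $\tgtManif{\vk}{\MmG}=\TG{\vsol}$ (constant on the flat $\MmG$); claim (3) is identical with $(J,\xk,\xsol)$ in place of $(G,\vk,\vsol)$. I expect the main obstacle to be the clean invocation of the identification theorem across the coupled updates — checking simultaneously for both $J$ and $G$ that the extracted subgradients converge into the relative interior, and justifying that partial smoothness propagates along the affine manifold so that the identified model subspaces are genuinely locally constant.
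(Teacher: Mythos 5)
Your proposal is correct and follows essentially the same route as the paper: global convergence of $(\zk,\xk,\vk)$ via firm nonexpansiveness of $\BDR$ and the proximity operators, extraction of subgradients from the two prox updates that converge to the relative-interior points encoded by \eqref{eq:conditionnondeg}, invocation of the Hare--Lewis finite identification theorem for claim (1), and sharpness of partial smoothness along the affine manifold for claims (2)--(3). The only cosmetic difference is that you prove the value convergence $J(\xk)\to J(\xsol)$ (and likewise for $G$) by a direct subgradient-inequality plus lower-semicontinuity argument, whereas the paper cites subdifferential continuity of proper lsc convex functions; both are valid.
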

\begin{proof}

Standard arguments using that $\BDR$ is firmly non-expansive allow to show that the iterates $\zk$ converge globally to a fixed point $\zfix \in \Fix(\BDR)$, by interpreting DR as a relaxed Krasnosel'ski\u{\i}-Mann iteration. Moreover, the shadow point $\xsol \eqdef \prox_{\gamma J}(\zfix)$ is a solution of \eqref{eq:minP}, see e.g. \cite{bauschke2011convex}. In turn, using non-expansiveness of $\prox_{\gamma J}$, and as we are in finite dimension, we conclude also that the sequence $\xk$ converges to $\xsol$. This entails that $\vk$ converges to $\vsol$ (by non-expansiveness of $\prox_{\gamma G}$).

Now \eqref{eq:conditionnondeg} is equivalent to
\begin{equation}\label{eq:relativeinclusion}
\qfrac{\zsol - \xsol}{\gamma} \in \ri\Pa{\partial J(\xsol)}  \qandq  \qfrac{\xsol - \zsol}{\gamma} \in \ri\Pa{\partial G(\xsol)}  .
\end{equation}

\begin{enumerate}[label={\rm (\arabic{*})}]
\item 
The update of $\xkp$ and $\vkp$ in \eqref{eq:drsgen} is equivalent to the monotone inclusions
\begin{equation*}
\qfrac{\zkp - \xkp}{\gamma}  \in \partial J(\xkp) \qandq \qfrac{2\xk-\zk - \vkp}{\gamma} \in \partial G(\vkp) ~. 
\end{equation*}
It then follows that
\[
\begin{aligned}
\dist\Pa{\tfrac{\zsol - \xsol}{\gamma}, \partial J(\xkp)} 
\leq~& \qfrac{1}{\gamma}\pa{\norm{\zkp-\zfix} + \norm{\xkp-\xsol}}  \to 0  
\end{aligned}
\]
and
\[
\begin{aligned}
\dist\Pa{\tfrac{\xsol - \zsol}{\gamma}, \partial G(\vkp)} 
\leq~& \qfrac{1}{\gamma}\pa{\norm{\zk-\zfix} + \norm{\xk-\xsol} + \norm{\xk-\vkp}} \\
\leq~& \qfrac{1}{\gamma}\pa{\norm{\zk-\zfix} + 2\norm{\xk-\xsol} + \norm{\vkp-\xsol}} \to 0  .
\end{aligned}
\]
By assumption, $J \in \Gamma_0(\RR^n)$ and $G \in \Gamma_0(\RR^n)$, and thus are sub-differentially continuous at every point in their respective domains \cite[Example~13.30]{rockafellar1998variational}, and in particular at $\xsol$. It then follows that $J(\xk) \to J(\xsol)$ and $G(\vk) \to G(\vsol)$. Altogether, this shows that the conditions of \cite[Theorem~5.3]{hare2004identifying} are fulfilled for $J$ and $G$, and the finite identification claim follows.
%
\item In this case, when $\MmG$ is {\em affine}, then $\MmG = \xsol+\TG{\xsol}$. Since $G$ is partly smooth at $\xsol$ relative to $\xsol+\TG{\xsol}$, the sharpness property holds at all nearby points in $\xsol+\TG{\xsol}$ \cite[Proposition~2.10]{Lewis-PartlySmooth}. Thus for $k$ large enough, i.e. $\vk$ sufficiently close to $\xsol$, we have indeed $\tgtManif{\vk}{\xsol+\TG{\xsol}}=\TG{\xsol}=\TG{\vk}$ as claimed. When $\MmG$ is {\em linear}, then $\TG{\xsol} = \xsol+\TG{\xsol}$, and the result follows easily.
\item Similar to (2). \qedhere
\end{enumerate}
\end{proof}

\begin{rem}\label{rem:fixBPDR}{~}\\\vspace*{-0.5cm}
\begin{enumerate}
\item Condition \eqref{eq:conditionnondeg} can be interpreted as a non-degeneracy assumption, and viewed as a geometric generalization of the strict complementarity of non-linear programming. Such a condition is almost necessary for the finite identification of the partial smoothness active manifolds \cite{HareLewisAlgo}.
\item When the minimizer is unique, using the fixed-point set characterization of DR, it can be shown that condition \eqref{eq:conditionnondeg} is also equivalent to $\zsol \in \ri\Pa{\Fix(\BDR)}$.
\end{enumerate}
\end{rem}

\section{Local Linear Convergence of Douglas--Rachford}
\label{sec:DRrate}


\subsection{Angles between subspaces}

Let us start with the principal angles and the Friedrichs angle between two subspaces $\Uu$ and $\Vv$, which are crucial for our quantitative analysis of the convergence rates. Without loss of generality, let $1\leq p\eqdef\dim(\Uu) \leq q\eqdef\dim(\Vv) \leq n-1$.

\begin{defn}[Principal angles] 
The principal angles $\theta_k \in [0,\frac{\pi}{2}]$, $k=1,\ldots,p$ between $\Uu$ and $\Vv$ are defined by, with $u_0 = v_0 \eqdef 0$
\begin{align*}
\cos \theta_k \eqdef \dotp{u_k}{v_k} = \max \dotp{u}{v} ~~ s.t. ~~	
& u \in \Uu, v \in \Vv, \norm{u}=1, \norm{v}=1, \\
&\dotp{u}{u_i}=\dotp{v}{v_i}=0, ~ i=0,\ldots,k-1 .
\end{align*}
The principal angles $\theta_k$ are unique with $0 \leq \theta_1 \leq \theta_2 \leq \ldots \leq \theta_p \leq \pi/2$.
\end{defn}

\begin{defn}[Friedrichs angle] 
The Friedrichs angle $\theta_{F} \in ]0,\frac{\pi}{2}]$ between $\Uu$ and $\Vv$ is
\begin{equation*}
\cos \theta_F(\Uu,\Vv) \eqdef \max \dotp{u}{v} ~~s.t.~~
u \in \Uu \cap (\Uu \cap \Vv)^\perp, \norm{u}=1 ,\,
v \in \Vv \cap (\Uu \cap \Vv)^\perp, \norm{v}=1  .
\end{equation*}
\end{defn}

The following relation between the Friedrichs and principal angles is of paramount importance to our analysis, whose proof can be found in \cite[Proposition~3.3]{Bauschke14}.
\begin{lem}[Principal angles and Friedrichs angle]
\label{lem:fapa}
The Friedrichs angle is exactly $\theta_{d+1}$ where $d \eqdef \dim(\Uu \cap \Vv)$. Moreover, $\theta_{F}(\Uu,\Vv) > 0$.
\end{lem}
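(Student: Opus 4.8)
The plan is to prove the two assertions together by showing that the first $d$ principal angles vanish, that the principal vectors associated to these zero angles form an orthonormal basis of $\Uu\cap\Vv$, and that the maximization defining $\theta_{d+1}$ then coincides verbatim with the one defining the Friedrichs angle. The only elementary fact I would record at the outset is the diagonal identity coming straight from the recursive definition: each pair of principal vectors satisfies $\dotp{u_k}{v_k}=\cos\theta_k\le 1$, and by the equality case of Cauchy--Schwarz, $\cos\theta_k=1$ forces $u_k=v_k$, in which case $u_k=v_k\in\Uu\cap\Vv$.

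Next I would show $\theta_1=\cdots=\theta_d=0$ and that their vectors span the intersection. Since $\dim(\Uu\cap\Vv)=d$, any unit $w\in\Uu\cap\Vv$ is admissible in the $k=1$ problem and yields $\dotp{w}{w}=1$, so $\theta_1=0$ and one may take $u_1=v_1=w$. Inductively, having selected orthonormal $u_1=v_1,\dots,u_j=v_j$ in $\Uu\cap\Vv$ with $j<d$, the subspace $(\Uu\cap\Vv)\cap\{u_1,\dots,u_j\}^\perp$ is still nonzero, so the $(j+1)$-th maximization again attains the value $1$, giving $\theta_{j+1}=0$ with $u_{j+1}=v_{j+1}\in\Uu\cap\Vv$. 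After $d$ steps the orthonormal family $u_1,\dots,u_d$ lies in $\Uu\cap\Vv$ and, having cardinality $d=\dim(\Uu\cap\Vv)$, forms an orthonormal basis of it, with $v_i=u_i$ throughout.

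I would then establish $\theta_{d+1}>0$ by contradiction: if $\theta_{d+1}=0$, the opening observation gives $u_{d+1}=v_{d+1}\in\Uu\cap\Vv=\mathrm{span}\{u_1,\dots,u_d\}$, while the constraints force $u_{d+1}\perp u_1,\dots,u_d$, whence $u_{d+1}=0$, contradicting $\norm{u_{d+1}}=1$. Finally I would identify the two optimization problems. In the definition of $\cos\theta_{d+1}$ the feasible set requires $u\in\Uu$, $v\in\Vv$, both unit norm, together with $\dotp{u}{u_i}=\dotp{v}{v_i}=0$ for $i=1,\dots,d$ (the $i=0$ constraint being vacuous since $u_0=v_0=0$). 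As $u_1,\dots,u_d$ and $v_1,\dots,v_d$ are the same orthonormal basis of $\Uu\cap\Vv$, these orthogonality constraints are exactly $u,v\in(\Uu\cap\Vv)^\perp$. Hence the feasible set is precisely $\{u\in\Uu\cap(\Uu\cap\Vv)^\perp:\norm{u}=1\}\times\{v\in\Vv\cap(\Uu\cap\Vv)^\perp:\norm{v}=1\}$ and the objective $\dotp{u}{v}$ is unchanged, so $\cos\theta_{d+1}=\cos\theta_F$, i.e. $\theta_F=\theta_{d+1}$; positivity of $\theta_F$ is inherited from $\theta_{d+1}>0$.

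The main obstacle I anticipate is the bookkeeping around the principal vectors when angles repeat: one must argue that \emph{whatever} admissible choice the recursion makes for the first $d$ vectors, their span is forced to equal all of $\Uu\cap\Vv$, which is what the induction above secures. A secondary point to dispatch is the degenerate case $d=\dim\Uu$ (that is, $\Uu\subseteq\Vv$), where $\Uu\cap(\Uu\cap\Vv)^\perp=\{0\}$ and the Friedrichs maximization is over an empty constraint set; there one adopts the standard convention $\theta_F=\pi/2$ and extends the principal-angle list by right angles, keeping the identity $\theta_F=\theta_{d+1}$ consistent.
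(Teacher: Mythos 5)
Your proof is correct, and it is genuinely different from the paper's treatment: the paper offers no proof of Lemma~\ref{lem:fapa} at all, deferring entirely to \cite[Proposition~3.3]{Bauschke14}, whereas you give a self-contained variational argument from the definitions. Your three ingredients are all sound: the Cauchy--Schwarz equality case showing $\cos\theta_k=1$ forces $u_k=v_k\in\Uu\cap\Vv$; the induction showing that \emph{any} admissible maximizers $u_1,\dots,u_d$ of the first $d$ recursive problems form an orthonormal basis of $\Uu\cap\Vv$ (this is exactly the right thing to verify --- maximizers are not unique, and a proof that fixed a preferred basis of the intersection in advance would not match the recursive definition, so your "whatever admissible choice" bookkeeping closes a common gap); and the observation that the constraints $\dotp{u}{u_i}=\dotp{v}{v_i}=0$ for $i=1,\dots,d$ then say precisely $u,v\in(\Uu\cap\Vv)^\perp$, making the $(d+1)$-st problem verbatim the Friedrichs problem, with positivity following since a cosine equal to $1$ would produce a unit vector in $(\Uu\cap\Vv)\cap(\Uu\cap\Vv)^\perp=\ens{0}$. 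One small point worth making explicit is that the maximum at step $d+1$ is attained (the feasible set is compact and, when $d<p$, nonempty since $\dim\pa{\Uu\cap(\Uu\cap\Vv)^\perp}=p-d\geq 1$), so the equality-case argument legitimately applies to a maximizer. By contrast, the development cited in \cite{Bauschke14} obtains the identity through the spectral/SVD machinery for principal angles (singular values of $X^TY$, with the multiplicity of the singular value $1$ equal to $\dim(\Uu\cap\Vv)$); that route buys compatibility with the projector calculus used downstream in the convergence-rate analysis, while yours buys elementary self-containedness at the cost of not exposing the singular-value structure. You were also right to flag the degenerate case $\Uu\subseteq\Vv$ (i.e.\ $d=p$): with the paper's unit-sphere formulation the Friedrichs feasible set is empty and $\theta_{d+1}$ falls outside the list $\theta_1,\dots,\theta_p$, so a convention is genuinely needed; your choice $\theta_F=\pi/2$ is consistent with the standard one in \cite{Bauschke14}, where the supremum is taken over the unit ball so that the Friedrichs cosine is $0$ in that case.
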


\begin{rem}
One approach to obtain the principal angles is through the singular value decomposition (SVD). For instance, let $X\in\RR^{n\times p}$ and $Y\in\RR^{n\times q}$ form the orthonormal bases for the subspaces $\Uu$ and $\Vv$ respectively. Let $A \Sigma B^T$ be the SVD of $X^TY \in \RR^{p\times q}$, then $\cos\theta_k = \sigma_k,~ k = 1,2,\ldots,p$ and $\sigma_k$ corresponds to the $k$'th largest singular value in $\Sigma$.
\end{rem}

%
%

\subsection{Partial smoothness and Riemannian gradient and hessian}

Let function $G$ be $C^2$-partly smooth at $\xsol$ relative to a manifold $\MmG$, we denote $\widetilde{G}$ its $C^2$-smooth representative (extension) on $\MmG$. The Riemannian (covariant) gradient of $G$ is the vector field $\nabla_{\MmG} G(\xsol) \allowbreak\in \tgtManif{\MmG}{\xsol}=\TG{\xsol}$,
\[
\dotp{\nabla_{\MmG} G(\xsol)}{h} = \frac{d}{dt}G\Pa{\proj_{\MmG}(\xsol+th)}\big|_{t=0} ,~ \forall h \in \TG{\xsol} ,
\]
where $\proj_{\MmG}$ is the projection operator onto $\MmG$. The Riemannian (covariant) hessian of $G$ is the symmetric linear mapping $\nabla^2_{\MmG} G(\xsol)$ from $\TG{\xsol}$ into itself defined as
\[
\dotp{\nabla^2_{\MmG} G(\xsol)h}{h} = \frac{d^2}{dt^2}G\Pa{\proj_{\MmG}(\xsol+th)}\big|_{t=0} ,~ \forall h \in \TG{\xsol} .
\]
This definition agrees with the usual definition using geodesics or connections. When $\MmG$ is a Riemannian submanifold of $\RR^n$, the Riemannian gradient is also given by
\begin{equation}
\label{eq:riemgrad}
\nabla_{\MmG} G(\xsol) = \proj_{\TG{\xsol}} \nabla \widetilde{G}(\xsol) ,
\end{equation}
and, $\forall h \in \TG{\xsol}$, the Riemannian hessian reads
\[
\nabla_{\MmG}^2 G(\xsol) h = \proj_{\TG{\xsol}} \Pa{\dder\Pa{\nabla_{\MmG} G(\xsol)}[h]} ,
\]
where $\dder$ stands for the directional derivative operator, $\nabla \widetilde{G}(\xsol)$ is the (Euclidean) gradient of $\widetilde{G}$ at $\xsol$. When $\MmG$ is a affine/linear submanifold of $\RR^n$, then obviously $\MmG=\xsol+\TG{\xsol}$, and we get immediately from the definition above that
\begin{equation}
\label{eq:riemhess}
\nabla_{\MmG}^2 G(\xsol) = \proj_{\TG{\xsol}}\nabla^2 \widetilde{G}(\xsol)\proj_{\TG{\xsol}}  ,
\end{equation}
where $\nabla^2 \widetilde{G}(\xsol)$ is the (Euclidean) hessian of $\widetilde{G}$ at $\xsol$.

\begin{lem}
\label{lem:riemgrad}
Let the function $G$ be partly smooth at the point $\xsol$ relative to the manifold $\MmG$. Then given any $x \in \MmG$ near $\xsol$
\[
e_{G}^{x} \eqdef \proj_{\TG{x}}\Pa{\partial G(x)} = \nabla_{\MmG} G(x) = \proj_{\Aff(\partial G(x))}(0) .
\]
Moreover, the Riemannian gradient does not depend on the smooth representation.
\end{lem}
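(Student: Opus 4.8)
The plan is to prove the two displayed equalities separately and then read off the representation-independence as an immediate corollary. Throughout I would fix $x \in \MmG$ close enough to $\xsol$ that $G$ remains partly smooth at $x$ relative to $\MmG$; by the sharpness property and its local propagation \cite[Proposition~2.10]{Lewis-PartlySmooth}, one has $\tgtManif{x}{\MmG}=\TG{x}=\LinHull\Pa{\partial G(x)}^\perp$, so that $\proj_{\TG{x}}\Pa{\partial G(x)}$ is genuinely a singleton (the convex set $\partial G(x)$ lies in an affine translate of $\LinHull\Pa{\partial G(x)}=\TG{x}^\perp$ and collapses to a single point under $\proj_{\TG{x}}$), which legitimises the notation $e_G^x$.

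First I would dispatch the purely geometric identity $\proj_{\TG{x}}\Pa{\partial G(x)}=\proj_{\Aff(\partial G(x))}(0)$. Write $A \eqdef \Aff(\partial G(x))$ with parallel subspace $L \eqdef \LinHull(\partial G(x))=\TG{x}^\perp$. The point $\proj_A(0)$ is characterised by $\proj_A(0)\in A$ together with $0-\proj_A(0)\perp L$, whence $\proj_A(0)\in L^\perp=\TG{x}$. Parametrising $A=\proj_A(0)+L$, every $a=\proj_A(0)+\ell\in A$ (with $\ell\in L$) satisfies $\proj_{\TG{x}}(a)=\proj_{\TG{x}}\Pa{\proj_A(0)}+\proj_{\TG{x}}(\ell)=\proj_A(0)$, since $\proj_A(0)\in\TG{x}$ projects to itself while $\ell\in\TG{x}^\perp$ projects to $0$. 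Taking $a\in\partial G(x)\subset A$ gives the claimed equality.

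The core of the lemma, and the step I expect to be the main obstacle, is the identification $\proj_{\TG{x}}\Pa{\partial G(x)}=\nabla_{\MmG}G(x)$; equivalently, that every subgradient projects onto the tangent space to the \emph{same} vector, the Riemannian gradient. The plan is to exploit smoothness of $G$ along $\MmG$ together with the subgradient inequality. Fix $v\in\partial G(x)$ and an arbitrary $h\in\TG{x}$, and let $\gamma$ be a smooth curve on $\MmG$ with $\gamma(0)=x$ and $\gamma'(0)=h$ (e.g. $t\mapsto\proj_{\MmG}(x+th)$, which by Lemma~\ref{lemma:proj_M} is $C^1$ with this derivative). Because $G|_{\MmG}$ is $C^2$ near $x$, the scalar function $\phi(t)\eqdef G(\gamma(t))-G(x)-\dotp{v}{\gamma(t)-x}$ is differentiable near $0$, while the subgradient inequality forces $\phi\geq 0=\phi(0)$, so that $t=0$ is an interior minimiser and $\phi'(0)=0$. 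Computing $\phi'(0)=\dotp{\nabla_{\MmG}G(x)}{h}-\dotp{v}{h}$ and letting $h$ range over $\TG{x}$ yields $\proj_{\TG{x}}(\nabla_{\MmG}G(x)-v)=0$, hence $\proj_{\TG{x}}(v)=\nabla_{\MmG}G(x)$ for every $v\in\partial G(x)$. The delicate points are justifying that $\frac{d}{dt}G(\gamma(t))\big|_{t=0}=\dotp{\nabla_{\MmG}G(x)}{h}$ — which is exactly the intrinsic definition of the Riemannian gradient recalled before the lemma (or \eqref{eq:riemgrad} combined with the fact that $G$ agrees with its representative on $\MmG$) — and confirming that $\phi$ is genuinely differentiable at the interior point $t=0$, which is where the $C^2$ regularity of the restriction, rather than merely one-sided information from convexity, is essential.

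Finally, representation-independence costs nothing more: the quantity $e_G^x=\proj_{\TG{x}}\Pa{\partial G(x)}$ is built solely from $\partial G(x)$ and the tangent space $\TG{x}$, both intrinsic to $G$ and $\MmG$ and blind to the choice of smooth extension $\widetilde{G}$. Since the previous step identifies $\nabla_{\MmG}G(x)$ with this intrinsic object, the Riemannian gradient cannot depend on the representative; equivalently, two representatives differ by a function vanishing on $\MmG$, whose Euclidean gradient at $x\in\MmG$ lies in $\TG{x}^\perp$ and is therefore annihilated by $\proj_{\TG{x}}$ in \eqref{eq:riemgrad}.
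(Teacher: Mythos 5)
Your proof is correct, but it takes a genuinely different route from the paper: the paper's entire proof is a citation, deducing the chain of equalities from \cite[Proposition~17]{Daniilidis06} (via partial smoothness and local normal sharpness) and the representation-independence from \cite[Proposition~9]{Daniilidis06}, whereas you prove everything from scratch. Your two ingredients both check out. The identity $\proj_{\TG{x}}\Pa{\partial G(x)} = \proj_{\Aff(\partial G(x))}(0)$ is pure linear algebra once one notes $\LinHull(\partial G(x)) = \pa{\TG{x}}^\perp$ (which is exactly the sharpness property, legitimately propagated to nearby $x \in \MmG$ by \cite[Proposition~2.10]{Lewis-PartlySmooth}, the same device the paper uses in Theorem~\ref{thm:genident}(2)). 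The core identification $\proj_{\TG{x}}(v) = \nabla_{\MmG} G(x)$ for every $v \in \partial G(x)$ via first-order optimality of $\phi(t) = G(\gamma(t)) - G(x) - \dotp{v}{\gamma(t)-x}$ along the $C^1$ curve $\gamma(t) = \proj_{\MmG}(x+th)$ is sound: the subgradient inequality gives $\phi \geq 0 = \phi(0)$ on a two-sided neighbourhood of $t=0$, differentiability of $\phi$ follows from the $C^2$ restriction and Lemma~\ref{lemma:proj_M}, and $\frac{d}{dt}G(\gamma(t))\big|_{t=0} = \dotp{\nabla_{\MmG} G(x)}{h}$ is precisely the paper's intrinsic definition of the Riemannian gradient; since $\nabla_{\MmG} G(x) \in \TG{x}$, letting $h$ range over $\TG{x}$ gives the claim. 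Your closing argument for representation-independence (two representatives differ by a function vanishing on $\MmG$, whose gradient at $x \in \MmG$ is orthogonal to $\TG{x}$, hence killed in \eqref{eq:riemgrad}) is the right one, and correctly targets the formula \eqref{eq:riemgrad} rather than the intrinsic definition, which never mentions a representative. What each approach buys: the paper's citation is shorter and defers to the general variational-analytic results of Daniilidis et al., while your argument is self-contained, makes transparent \emph{why} all subgradients share the same tangential component, and is essentially the proof underlying the cited propositions, so it would serve as a legitimate replacement.
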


\begin{proof}
The first equalities follow from \cite[Proposition~17]{Daniilidis06} using partial smoothness and local normal sharpness. The last assertion is \cite[Proposition~9]{Daniilidis06}.
\end{proof}

From now on, we assume that the partial smoothness manifolds $\MmG$ and $\MmJ$ are affine/linear, i.e. they are parallel to the corresponding tangent spaces $\TG{\xsol}$ and $\TJ{\xsol}$. Since the latter have a structure of vector space, one can apply the classical Taylor series to the Riemannian gradient presented in Lemma~\ref{lem:riemgrad} and make appear the Riemannian hessian \eqref{eq:riemhess}. We state the result for $G$ and the same claim holds of course for $J$ with proper substitution.

\begin{lem}
\label{lem:riemhesspsd}
Let function $G$ be partly smooth at the point $\xsol$ relative to the affine/linear manifold $\MmG$. For any $h \in \RR^n$, let $\xsol_{h} \eqdef \xsol + \PG h$, then we have
\[
e_{G}^{\xsol_{h}} = e_{G}^{\xsol} + \PG \nabla^2 \widetilde{G}\pa{\xsol} \PG h + o(h) .
\]
Moreover, the Riemannian hessian is semi-positive definite,
\[
\dotp{\PG\nabla^2 \widetilde{G}\pa{\xsol}\PG h}{h} \geq 0 .
\]
\end{lem}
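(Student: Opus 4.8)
The plan is to reduce everything to the Euclidean gradient and hessian of the smooth representative $\widetilde{G}$, exploiting the identification of $e_{G}^{x}$ with the Riemannian gradient. First I would invoke Lemma~\ref{lem:riemgrad} to write, for every $x \in \MmG$ near $\xsol$,
\[
e_{G}^{x} = \nabla_{\MmG} G(x) = \proj_{\TG{x}}\nabla\widetilde{G}(x) = \PG\nabla\widetilde{G}(x) ,
\]
where the third equality is \eqref{eq:riemgrad} combined with the fact that, $\MmG$ being affine/linear, the tangent space is constant ($\TG{x}=\TG{\xsol}$, hence $\proj_{\TG{x}}=\PG$); this constancy is exactly the sharpness property already used in Theorem~\ref{thm:genident}(2). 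The key observation is that, since $\MmG = \xsol+\TG{\xsol}$ and $\PG h \in \TG{\xsol}$, the perturbed point $\xsol_{h} = \xsol + \PG h$ itself lies in $\MmG$, so this representation applies verbatim at $\xsol_{h}$ and gives $e_{G}^{\xsol_{h}} = \PG\nabla\widetilde{G}\pa{\xsol_{h}}$.

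Next I would Taylor-expand the Euclidean gradient of $\widetilde{G}$, which is $C^2$ near $\xsol$, about $\xsol$ along $\xsol_{h}-\xsol = \PG h$:
\[
\nabla\widetilde{G}\pa{\xsol_{h}} = \nabla\widetilde{G}\pa{\xsol} + \nabla^2\widetilde{G}\pa{\xsol}\PG h + o\pa{\norm{h}} .
\]
Applying $\PG$ and using $e_{G}^{\xsol} = \PG\nabla\widetilde{G}\pa{\xsol}$ then yields
\[
e_{G}^{\xsol_{h}} = e_{G}^{\xsol} + \PG\nabla^2\widetilde{G}\pa{\xsol}\PG h + o\pa{\norm{h}} ,
\]
in which the middle operator is precisely the Riemannian hessian \eqref{eq:riemhess}; this is the first claim. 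I would remark that, although $\nabla\widetilde{G}$ and $\nabla^2\widetilde{G}$ depend individually on the chosen $C^2$ extension of $G|_{\MmG}$, the projected quantities $\PG\nabla\widetilde{G}\pa{\xsol}$ and $\PG\nabla^2\widetilde{G}\pa{\xsol}\PG$ do not, being intrinsic Riemannian objects by Lemma~\ref{lem:riemgrad} and \eqref{eq:riemhess}.

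For the semi-positive definiteness I would argue by convexity. Fixing $h$, consider $\varphi(t)\eqdef G\pa{\xsol+t\PG h}$. Since $\MmG$ is affine, the whole line $\enscond{\xsol+t\PG h}{t\in\RR}$ lies in $\MmG$, where $G$ coincides with its $C^2$ representative $\widetilde{G}$; hence $\varphi$ is $C^2$ near $t=0$ with $\varphi''(0) = \dotp{\nabla^2\widetilde{G}\pa{\xsol}\PG h}{\PG h}$. On the other hand $\varphi$ is the restriction of the convex function $G$ to a line, hence convex, so $\varphi''(0)\geq 0$. Using self-adjointness and idempotence of the projection $\PG$, $\varphi''(0) = \dotp{\PG\nabla^2\widetilde{G}\pa{\xsol}\PG h}{h} \geq 0$, which is the stated inequality.

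The only delicate point, and the step I would treat most carefully, is the interplay between the intrinsic Riemannian objects and their extrinsic representatives: one must ensure that expanding the extension's Euclidean gradient and then projecting genuinely reproduces the Riemannian Taylor expansion. This is exactly where the affine/linear hypothesis on $\MmG$ is indispensable, since it collapses the curvature (second-fundamental-form) contributions that would otherwise enter $\nabla^2_{\MmG}G$, renders the tangent space constant so that $\PG$ can be factored out of the expansion uniformly, and guarantees $\xsol_{h}\in\MmG$ so that the representative $\widetilde{G}$ and the convexity argument may be applied directly.
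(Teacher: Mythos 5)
Your proof is correct. For the first assertion you do exactly what the paper does: the paper dismisses it as ``clear from the discussion above'', and that discussion is precisely your computation --- since $\MmG$ is affine/linear, $\xsol_h\in\MmG$, the tangent space (hence $\PG$) is constant along $\MmG$, so $e_G^{x}=\PG\nabla\widetilde{G}(x)$ by Lemma~\ref{lem:riemgrad} and \eqref{eq:riemgrad}, and a classical Taylor expansion of $\nabla\widetilde{G}$ followed by projection produces the Riemannian hessian \eqref{eq:riemhess}. For the semi-positive definiteness your route differs from the paper's. The paper argues via maximal monotonicity of $\partial G$: it takes arbitrary subgradients $u\in\partial G(\xsol)$ and $v\in\partial G(\xsol+t\PG h)$, writes $0\leq\dotp{t^{-1}(v-u)}{\PG h}$, projects onto $\TG{\xsol}$, identifies the projected subgradients with Riemannian gradients via Lemma~\ref{lem:riemgrad}, and passes to the limit $t\to 0$ using \eqref{eq:riemgrad}. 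You instead restrict $G$ to the line $t\mapsto\xsol+t\PG h$ inside the affine manifold and invoke one-dimensional convexity of $\varphi$ to get $\varphi''(0)\geq 0$. The two arguments are two faces of the same fact (monotonicity of $\partial G$ is the first-order form of convexity), but yours is more elementary --- it never touches set-valued subgradients, only the smooth representative along a line --- whereas the paper's stays at the level of the subdifferential and difference quotients of the Riemannian gradient, which meshes directly with the machinery of Lemma~\ref{lem:riemgrad}. One cosmetic imprecision in your write-up: $G$ coincides with $\widetilde{G}$ only on $\MmG$ \emph{near} $\xsol$, not along the whole line $\enscond{\xsol+t\PG h}{t\in\RR}$; since you only need $\varphi$ to be $C^2$ near $t=0$, this is harmless, but the claim should be stated locally.
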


\begin{proof}
The first assertion is clear from the discussion above. We now prove the second claim. As $G$ is a proper lsc convex function, $\partial G$ is a maximal monotone operator. Thus, $\forall t > 0$,
\begin{align*}
0 \leq 	\dotp{t^{-1}\pa{v - u}}{\PG h} 	
&= \dotp{t^{-1}\PG\pa{v - u}}{h} \quad \forall u \in \partial G(\xsol) \qandq  v \in \partial G(\xsol+t \PG h) \\
\scriptsize{\text{($\MmG$ is affine/linear})\quad}
&= \dotp{t^{-1}\Pa{\proj_{\TG{\xsol_{h}}}v - \PG u}}{h} \quad \forall u \in \partial G(\xsol) \qandq  v \in \partial G(\xsol+t\PG h) \\
\scriptsize{\text{(By definition)}\quad}
&= \dotp{t^{-1}\pa{e_{G}^{\xsol_{h}} - e_{G}^{\xsol}}}{h} 	\\
\scriptsize{\text{(Lemma~\ref{lem:riemgrad})}\quad}
&= \dotp{t^{-1}\pa{\nabla_{\MmG} G(\xsol+t\PG h)-\nabla_{\MmG} G(\xsol)}}{h} \\
\scriptsize{\text{(\eqref{eq:riemgrad} and $\MmG$ is affine/linear)}\quad}
&= \dotp{t^{-1}\PG\pa{\nabla \widetilde{G}(\xsol+t\PG h)-\nabla \widetilde{G}(\xsol)}}{h} .
\end{align*}
Passing to the limit as $t \to 0$ leads to the desired result.
\end{proof}

\subsection{Convergence rates of a fixed-point matrix}
We now establish the convergence rates of a matrix that plays a fundamental role in the DR algorithm.

Let
\begin{equation}\label{eq:Pk-P}
P = \gamma \PG \nabla^2 \widetilde{G}\pa{\xsol} \PG  \qandq 
Q = \gamma \PJ \nabla^2 \widetilde{J}\pa{\xsol} \PJ .
\end{equation}
Owing to Lemma~\ref{lem:riemhesspsd}, $\Id+P$ and $\Id+Q$ are symmetric positive definite, hence invertible. We then write their inverses as
\[
U = \pa{\Id+P}^{-1} \qandq V = \pa{\Id+Q}^{-1} .
\]
Define the matrix
\begin{equation}\label{eq:mtx-M}
\begin{aligned}
M 
&= \Id + 2\PG U \PG \PJ V\PJ - \PG U \PG - \PJ V \PJ  \\
&= \qfrac12\Id + \PG U \PG\pa{2 \PJ V\PJ - \Id} - \qfrac12\pa{2\PJ V\PJ-\Id}  \\
&= \qfrac12\Id + \qfrac12\pa{2\PG U \PG-\Id}\pa{2 \PJ V\PJ - \Id}   ,
\end{aligned}
\end{equation}
and the one parameterized by $\lambda_k \in ]0,2[$,
\[
M_{\lambda_k} = (1-\lambda_k)\Id + \lambda_k M  .
\]
Obviously, given any $\lambda \in ]0,2[$, we have
\[
M_{\lambda_k} - M_{\lambda} = -(\lambda_k-\lambda)\pa{\Id - M}  .
\]
To lighten the notation, we denote
\[
\BG \eqdef \PG U \PG \qandq \CJ \eqdef \PJ V \PJ .
\]

Our proofs will hinge on the following key lemma which characterizes the convergence behaviour of $M_{\lambda}$. We denote $\SJ{\xsol}=\pa{\TJ{\xsol}}^\perp$ and similarly for $\SG{\xsol}$.

\begin{lem}
\label{lem:Mpow}
Suppose that $\lambda \in ]0,2[$, then,
\begin{enumerate}[label={\rm (\arabic{*})}]
\item $M_{\lambda}$ is convergent to 
\[
M^\infty=\proj_{\ker\pa{\BG\pa{\Id-\CJ}+\pa{\Id-\BG}\CJ}},
\] 
and we have
\[
\forall k \in \NN,~ M_{\lambda}^k - M^\infty = \pa{M_{\lambda} - M^\infty}^k
\qandq
\rho\pa{M_{\lambda}-M^\infty} < 1.
\]
In particular, if 
\begin{equation}\label{eq:condMinfz}
\begin{cases}
\TJ{\xsol} \cap \TG{\vsol}=\ens{0}, \\
\Im\Pa{\Id-\CJ} \cap \SG{\vsol}=\ens{0} \qandq \\
\Im\Pa{\Id-\BG} \cap \TG{\xsol}=\ens{0}  ,
\end{cases}
\end{equation}
then $M^\infty=0$.
\item Given any $\rho \in ]\rho\pa{M_{\lambda}-M^\infty}, 1[$, there is $K$ large enough such that for all $k \geq K$,
\[
\norm{M_{\lambda}^k-M^\infty} = O(\rho^k) ~.
\]
\item If, moreover, $G \in \PSFLS{\xsol}{\TG{\xsol}}$ and $J \in \PSFLS{\xsol}{\TJ{\xsol}}$, then $M_{\lambda}$ converges to $\proj_{\pa{\TJ{\xsol} \cap \TG{\vsol}} \oplus \pa{\SJ{\xsol} \cap \SG{\vsol}}}$ with the optimal rate
\[
\sqrt{\pa{1-\lambda}^2+\lambda\pa{2-\lambda}\cos^2\theta_F\Pa{\TJ{\xsol},T_{\xsol}^{G}}} < 1 .
\]
In particular, if $\TJ{\xsol} \cap \TG{\vsol}=\SJ{\xsol} \cap \SG{\vsol}=\ens{0}$, then $M_{\lambda}$ converges to $0$ with the optimal rate 
\[
\sqrt{\pa{1-\lambda}^2+\lambda\pa{2-\lambda}\cos^2\theta_1\Pa{\TJ{\xsol},T_{\xsol}^{G}}} < 1 .
\]
\end{enumerate}
\end{lem}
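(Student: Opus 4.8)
The plan is to reduce the whole statement to a spectral analysis of the linear map $M_\lambda$, exploiting that it is an averaged nonexpansive operator. First I would record the algebraic skeleton: by the third expression in \eqref{eq:mtx-M}, $M = \tfrac12\pa{\Id + R_G R_J}$ with $R_G \eqdef 2\BG - \Id$ and $R_J \eqdef 2\CJ - \Id$. Lemma~\ref{lem:riemhesspsd} gives $P, Q \succeq 0$, hence $0 \prec U, V \preceq \Id$ and therefore $0 \preceq \BG, \CJ \preceq \Id$; consequently $R_G, R_J$ are symmetric with spectrum in $[-1,1]$, so $\norm{R_G}, \norm{R_J} \leq 1$ and $T \eqdef R_G R_J$ is a (generally non-symmetric) contraction. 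Writing $M_\lambda = (1-\tfrac{\lambda}{2})\Id + \tfrac{\lambda}{2}T$ with $\tfrac{\lambda}{2} \in ]0,1[$ exhibits $M_\lambda$ as averaged, whence $\norm{M_\lambda} \leq 1$ and $M_\lambda$ is power bounded.

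Next I would carry out the spectral analysis behind (1) and (2). Power boundedness confines the eigenvalues of $M_\lambda$ to the closed unit disk and rules out nontrivial Jordan blocks at unimodular eigenvalues, since otherwise $\norm{M_\lambda^k}$ would diverge. An eigenvalue $\mu$ of $M_\lambda$ corresponds to an eigenvalue $\nu$ of $T$ via $\mu = (1-\tfrac{\lambda}{2}) + \tfrac{\lambda}{2}\nu$; as $\abs{\nu} \leq 1$ and $\lambda \in ]0,2[$, the image of the unit disk under this affine map is a disk inscribed in the unit disk touching the circle only at $1$, so the sole unimodular eigenvalue is $\mu = 1$ and it is semisimple. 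Hence $M_\lambda^k$ converges to the spectral projection $\Pi$ onto $\Fix(M_\lambda) = \ker(\Id - M_\lambda) = \ker L$, where $L \eqdef \BG(\Id - \CJ) + (\Id - \BG)\CJ = \Id - M$. The decisive point is that $\Pi$ is \emph{orthogonal}: because $\norm{M_\lambda} \leq 1$, $M_\lambda x = x$ forces $M_\lambda^\top x = x$ (expand $\norm{M_\lambda^\top x - x}^2$ and use $\norm{M_\lambda^\top} \leq 1$), so $\Fix(M_\lambda) = \ker(\Id - M_\lambda^\top) = \Im\Pa{\Id - M_\lambda}^\perp$ and thus $\Pi = \proj_{\ker L} = M^\infty$. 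The power identity $M_\lambda^k - M^\infty = (M_\lambda - M^\infty)^k$ then follows from $M_\lambda M^\infty = M^\infty M_\lambda = M^\infty = (M^\infty)^2$, and $\rho(M_\lambda - M^\infty) < 1$ since deleting the eigenvalue $1$ leaves only eigenvalues of modulus $<1$. Claim (2) is immediate from Gelfand's formula applied to $M_\lambda - M^\infty$. For the triviality of $M^\infty$ under \eqref{eq:condMinfz}, I would solve $Lx = 0$ by projecting onto $\TG{\vsol}$ and $\SG{\vsol}$: using $\Im(\BG) \subseteq \TG{\vsol}$ and $U, V \succ 0$ one gets $\CJ x \in \TJ{\xsol} \cap \TG{\vsol}$, and the transversality conditions then force $x = 0$.

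For (3) I would first note that the locally polyhedral hypothesis, $G \in \PSFLS{\xsol}{\TG{\xsol}}$ and $J \in \PSFLS{\xsol}{\TJ{\xsol}}$, makes $e_x$ locally constant, so by Lemmas~\ref{lem:riemgrad} and~\ref{lem:riemhesspsd} the Riemannian hessians vanish, i.e. $P = Q = 0$, $U = V = \Id$, and $\BG = \PG$, $\CJ = \PJ$ reduce to the orthogonal projections onto the tangent subspaces. Then $R_G, R_J$ are genuine reflections, $T = R_G R_J$ is orthogonal, and $M = \tfrac12\pa{\Id + R_G R_J}$ is exactly the two-subspace Douglas--Rachford operator, which is normal. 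From $Mx = x \iff \PG x = \PJ x$ one reads off $\Fix(M) = \pa{\TJ{\xsol} \cap \TG{\vsol}} \oplus \pa{\SJ{\xsol} \cap \SG{\vsol}}$, while the nontrivial spectrum of $T$ is $\enscond{e^{\pm 2i\theta_k}}{k}$ with $\theta_k$ the principal angles between $\TJ{\xsol}$ and $\TG{\vsol}$; hence $M$ has eigenvalues $\cos\theta_k\, e^{\pm i\theta_k}$ and $M_\lambda$ eigenvalues of modulus $\sqrt{(1-\lambda)^2 + \lambda(2-\lambda)\cos^2\theta_k}$. The spectral radius off $\Fix(M)$ is governed by the smallest nonzero principal angle, which by Lemma~\ref{lem:fapa} is exactly the Friedrichs angle $\theta_F = \theta_{d+1}$; optimality holds because this value is genuinely attained as a modulus. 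The special case $\TJ{\xsol} \cap \TG{\vsol} = \SJ{\xsol} \cap \SG{\vsol} = \ens{0}$ gives $\Fix(M) = \ens{0}$ and $\theta_F = \theta_1$.

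The main obstacle is the general, non-polyhedral part of (1): since $M_\lambda$ is then not normal, convergence of its powers to an \emph{orthogonal} projection is not automatic, and one must separately secure (i) semisimplicity of the eigenvalue $1$ and (ii) orthogonality of the limit. Both are extracted from the single structural fact that $M_\lambda$ is nonexpansive in the Euclidean norm — precisely where $P, Q \succeq 0$ (Lemma~\ref{lem:riemhesspsd}) and the averaging identity are indispensable. The remaining work — identifying $\ker L$ under \eqref{eq:condMinfz} and running the principal-angle computation — is routine once the reflection structure of the polyhedral case is in hand.
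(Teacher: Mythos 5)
Your proposal is correct, and it reaches the lemma by a genuinely different, self-contained route. The paper's proof is citation-driven: it notes that $U,V$ are symmetric with eigenvalues in $]0,1]$, hence firmly non-expansive, deduces that $\BG$, $\CJ$ and then $M$ are firmly non-expansive so that $M_\lambda$ is $\tfrac{\lambda}{2}$-averaged, and then imports from the Bauschke et al.\ references the convergence of powers of averaged linear operators to $\proj_{\Fix M}$, the identity $M_{\lambda}^k - M^\infty = \pa{M_{\lambda} - M^\infty}^k$ with $\rho\pa{M_{\lambda}-M^\infty}<1$, and, for part (3), the two-subspace results (the characterization of $\Fix M$, normality, and the rate via the Friedrichs angle). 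You instead reprove these imported facts directly for the matrix at hand: writing $M_\lambda = \pa{1-\tfrac{\lambda}{2}}\Id + \tfrac{\lambda}{2}\pa{2\BG-\Id}\pa{2\CJ-\Id}$ gives non-expansiveness and power-boundedness; the inscribed-disk image under the spectral mapping pins the unique unimodular eigenvalue at $1$; power-boundedness forces semisimplicity; and the observation $\Fix(M_\lambda)=\Fix(M_\lambda^\top)$, valid for any non-expansive matrix, makes the limiting spectral projection orthogonal --- this last point is exactly the content the paper delegates to its citations and is the one step a reader cannot reconstruct from averagedness alone, since $M_\lambda$ is not normal in general; Gelfand's formula handles (2), and your explicit two-reflection eigenvalue computation with modulus $\sqrt{\pa{1-\lambda}^2+\lambda\pa{2-\lambda}\cos^2\theta_k}$ combined with Lemma~\ref{lem:fapa} replaces the paper's appeal to the prior two-subspace analysis in (3). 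Two small observations. First, your kernel argument under \eqref{eq:condMinfz} differs from the paper's: the paper verifies $\Fix M=\ens{0}$ through three inclusions matched one-to-one with the three conditions, whereas projecting $\pa{\BG\pa{\Id-\CJ}+\pa{\Id-\BG}\CJ}x=0$ onto $\SG{\vsol}$ gives $\CJ x \in \TJ{\xsol}\cap\TG{\vsol}=\ens{0}$, hence $\CJ x=0$ and then $\BG x=0$, so $x\in\SJ{\xsol}\cap\SG{\vsol}\subseteq \Im\pa{\Id-\CJ}\cap\SG{\vsol}=\ens{0}$; your route uses only the first two conditions, a mild sharpening that is perfectly compatible with the lemma since the conditions are only claimed sufficient. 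Second, in (3) your phrase ``the nontrivial spectrum of $T$ is $e^{\pm 2i\theta_k}$'' should also account for the eigenvalue $-1$ of $T$ on $\pa{\TJ{\xsol}\cap\SG{\vsol}}\oplus\pa{\SJ{\xsol}\cap\TG{\vsol}}$, which produces the $M_\lambda$-eigenvalue $1-\lambda$; its modulus $\abs{1-\lambda}$ is the $\theta=\tfrac{\pi}{2}$ instance of your formula and is dominated by the Friedrichs-angle value, so the claimed optimal rate stands --- this is bookkeeping, not a gap.
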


\begin{proof}
$~$\vspace*{-0.25cm}
\begin{enumerate}[label={\rm (\arabic*)}]
\item Since $U$ (resp. $V$) is linear, symmetric, and has eigenvalues in $]0,1]$, it is firmly non-expansive \cite[Corollary~4.3(ii)]{bauschke2011convex}. 
It then follows from \cite[Example~4.7]{bauschke2011convex} that $\BG$ and $\CJ$ are firmly non-expansive.
Therefore, we get that $M$ is {\rm firmly non-expansive} \cite[Proposition~4.21(i)-(ii)]{bauschke2011convex}, or equivalently that $M_{\lambda}$ is $\frac{\lambda}{2}$-averaged \cite[Corollary~4.29]{bauschke2011convex}. We then conclude from e.g.~\cite[Proposition~5.15]{bauschke2011convex} that $M_{\lambda}$ and $M$ are convergent, and their limit is $M_{\lambda}^{\infty} = \proj_{\Fix M_{\lambda}}=\proj_{\Fix M}=M^\infty$ \cite[Corollary~2.7(ii)]{Bauschke14}. Moreover, $M_{\lambda}^k - M^\infty = \pa{M_{\lambda} - M^\infty}^k$, $\forall k \in \NN$, and $\rho\pa{M_{\lambda}-M^\infty} < 1$ by \cite[Theorem~2.12]{Bauschke14}. It is also immediate to see that
\[
\Fix M = \ker\Pa{\BG\pa{\Id-\CJ}+\pa{\Id-\BG}\CJ} ~.
\]
Observe that
\begin{gather*}
\Im(\CJ) \subseteq \TJ{\xsol} \qandq \Im(\BG) \subseteq \TG{\xsol} \\
\ker\Pa{\Id-\BG} \subseteq \TG{\xsol} \qandq \ker\pa{\BG } = \SG{\vsol} \\
\Im\Pa{\pa{\Id-\BG}\CJ} \subseteq \Im\Pa{\Id-\BG} \qandq \Im\Pa{\BG\pa{\Id-\CJ}} \subseteq \TG{\xsol}  ,
\end{gather*}
where we used the fact that $U$ and $V$ are positive definite.
Therefore, $M_{\lambda}^{\infty}=0$, if and only if, $\Fix M=\ens{0}$, and for this to hold true, it is sufficient that
\[
\begin{cases}
\Im\pa{\CJ} \cap \ker\pa{\Id-\BG} \subseteq \TJ{\xsol} \cap \TG{\vsol}=\ens{0}, \\
\Im\pa{\Id-\CJ} \cap \ker\pa{\BG } = \Im\pa{\Id-\CJ} \cap \SG{\vsol}=\ens{0} \qandq \\
\Im\Pa{\pa{\Id-\BG}\CJ} \cap \Im\Pa{\BG\pa{\Id-\CJ}} \subseteq \Im\pa{\Id-\BG} \cap \TG{\xsol}=\ens{0} .
\end{cases}
\]
\item The proof of this statement is classical using the spectral radius formula, see e.g. \cite[Theorem~2.12(i)]{Bauschke14}.
\item In this case, we have $U = V = \Id$. In turn, $\BG=\PG$ and $\CJ=\PJ$, which yields 
\[
M = \Id + 2\PG \PJ - \PG - \PJ = \PG \PJ + \PGs\PJs  ,
\]
which is normal, and so is $M_\lambda$. From \cite[Proposition~3.6(i)]{BauschkeDR14}, we get that $\Fix M = \pa{\TJ{\xsol} \cap \TG{\vsol}} \oplus \pa{\SJ{\xsol} \cap \SG{\vsol}}$. Thus, combining normality, statement (i) and \cite[Theorem~2.16]{Bauschke14} we get that
\[
\norm{M_{\lambda}^{k+1-K}-M^{\infty}} = \norm{M_{\lambda}-M^{\infty}}^{k+1-K}
\]
and $\norm{M_{\lambda}-M^{\infty}}$ is the optimal convergence rate of $M_{\lambda}$. Using together \cite[Proposition 3.3]{Bauschke14} and arguments similar to those of the proof of \cite[Theorem~3.10(ii)]{BauschkeDR14} (see also \cite[Theorem~4.1(ii)]{Bauschke14}), we get indeed that
\[
\norm{M_{\lambda}-M^{\infty}} = \sqrt{\pa{1-\lambda}^2+\lambda\pa{2-\lambda}\cos^2\theta_F\Pa{\TJ{\xsol},T_{\xsol}^{G}}} .
\]
The special case is immediate. This concludes the proof. \qedhere
\end{enumerate}
\end{proof}

\subsection{Main result}

We are now in position to present the local linear convergence properties of DR. 

Denote 
\[
\delta_k \eqdef o\pa{\norm{\vkp-\vsol}} + o\pa{\norm{\xk-\xsol}} + o\pa{\norm{\zk-\zsol}} \qandq \Delta_{k,K} \eqdef \msum_{j=K}^k \delta_j  .
\]

\begin{thm}\label{thm:genlinear_rate}
Suppose that the DR scheme \eqref{eq:drsgen} is used with $\lambda_k \to \lambda \in ]0,2[$ to create a sequence $(\zk,\xk,\vk) \to (\zsol,\xsol,\xsol)$ such that  $J \in \PSFAL{\xsol}{\TJ{\xsol}}$ and $G \in \PSFAL{\vsol}{\TG{\vsol}}$, and \eqref{eq:conditionnondeg} holds. 
Then, 
\begin{enumerate}[label={\rm (\arabic{*})}]
\item Given any $\rho \in ]\rho\pa{M_{\lambda}-M^\infty}, 1[$, there is $K$ large enough such that for all $k \geq K$,
\begin{equation*}
\norm{\pa{\zk - \zfix} - {M^\infty}\pa{z^{K} - \zfix + \Delta_{k,K}}} = O\pa{\rho^{k}} .
\end{equation*}
In particular, if condition \eqref{eq:condMinfz} holds, then given any $\rho \in ]\rho\pa{M_{\lambda}-M^\infty}, 1[$, there is $K$ large enough such that for all $k \geq K$,
\begin{equation*}
\norm{\zk - \zfix} = O\pa{\rho^{k}} .
\end{equation*}
\item Assume moreover that $J \in \PSFALS{\xsol}{\TJ{\xsol}}$ and $G \in \PSFALS{\vsol}{\TG{\vsol}}$, and $\lambda_k \equiv \lambda \in ]0, 2[$. Then, there exists $K > 0$ such that for all $k \geq K$,
\begin{equation}\label{eq:rate}
\begin{aligned}
\norm{\pa{\zk - \zfix} - \proj_{\pa{\TJ{\xsol} \cap \TG{\vsol}} \oplus \pa{\SJ{\xsol} \cap \SG{\vsol}}}\pa{z^{K} - \zfix}}
&\leq \rho^{k-K} \norm{\pa{\Id-\proj_{\pa{\TJ{\xsol} \cap \TG{\vsol}} \oplus \pa{\SJ{\xsol} \cap \SG{\vsol}}}}\pa{z^{K} - \zfix}} \\
&\leq \rho^{k-K} \norm{z^{K} - \zfix},
\end{aligned}
\end{equation}
where $\rho = \sqrt{\pa{1-\lambda}^2+\lambda\pa{2-\lambda}\cos^2\theta_F\Pa{\TJ{\xsol},T_{\xsol}^{G}}} \in [0,1[$ is the optimal convergence rate.

In particular, if $\TJ{\xsol} \cap \TG{\vsol}=\SJ{\xsol} \cap \SG{\vsol}=\ens{0}$, then $\zk$ converges locally linearly to $\zsol$ with the optimal rate $\sqrt{\pa{1-\lambda}^2+\lambda\pa{2-\lambda}\cos^2\theta_1\Pa{\TJ{\xsol},T_{\xsol}^{G}}}$.
\end{enumerate}
\end{thm}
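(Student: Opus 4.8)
The plan is to linearize the DR fixed-point iteration around $\zfix$, show that the linearized update is governed precisely by the matrix $M_{\lambda}$ defined in \eqref{eq:mtx-M}, and then invoke Lemma~\ref{lem:Mpow} to read off the convergence behaviour. First I would use Theorem~\ref{thm:genident} together with the partial smoothness assumptions: for $k$ large enough the iterates $\xk$ and $\vk$ lie on the respective affine manifolds $\xsol+\TJ{\xsol}$ and $\vsol+\TG{\vsol}$, with the tangent spaces stabilized. This lets me apply the implicit proximity-operator expansion of Proposition~\ref{prop:proxps}. Writing the proximal steps in their monotone-inclusion form $\tfrac{\zkp-\xkp}{\gamma}\in\partial J(\xkp)$ and $\tfrac{2\xk-\zk-\vkp}{\gamma}\in\partial G(\vkp)$, and projecting onto the tangent and normal spaces, I would substitute the Riemannian gradient expansion $e_G^{\vk}=e_G^{\vsol}+\PG\nabla^2\widetilde G(\xsol)\PG(\vk-\vsol)+o(\cdot)$ from Lemma~\ref{lem:riemhesspsd}. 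The constant terms $e_G^{\vsol}$, $e_J^{\xsol}$ cancel against their counterparts at the fixed point (using that $\zfix$ solves the fixed-point equation), leaving a purely linear relation between the error vectors modulo the $o(\cdot)$ remainder.

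\medskip

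Next I would assemble these relations into a single update for the error $\zk-\zfix$. The key computation is that, after eliminating $\xkp$ and $\vkp$ in favour of $\zk-\zfix$, the increment obeys $\zkp-\zfix = M_{\lambda_k}(\zk-\zfix) + \gamma\,(\text{remainder})$, where $M_{\lambda_k}$ is exactly the matrix in \eqref{eq:mtx-M} with $P,Q$ the scaled Riemannian hessians from \eqref{eq:Pk-P}; the operators $U=(\Id+P)^{-1}$ and $V=(\Id+Q)^{-1}$ appear because inverting $\Id+P$ is precisely what solving the projected inclusion for $\PG(\vk-\vsol)$ produces. Since $\lambda_k\to\lambda$, I would absorb $M_{\lambda_k}-M_{\lambda}=-(\lambda_k-\lambda)(\Id-M)$ into the remainder term, which is then collected into $\delta_k$ and summed into $\Delta_{k,K}$. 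Unrolling the recursion from index $K$ gives $\zk-\zfix = M_{\lambda}^{k-K}(z^K-\zfix) + \sum_{j=K}^{k}M_{\lambda}^{k-j}\delta_j$. Applying Lemma~\ref{lem:Mpow}(1)--(2)---namely $M_{\lambda}^{k}-M^\infty=(M_{\lambda}-M^\infty)^k$ with $\rho(M_{\lambda}-M^\infty)<1$ and the $O(\rho^k)$ bound---and controlling the summed remainder against a geometric series yields statement (1); under condition \eqref{eq:condMinfz} one has $M^\infty=0$, so the $M^\infty$ term vanishes and $\norm{\zk-\zfix}=O(\rho^k)$.

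\medskip

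For statement (2), the polyhedrality hypotheses $J\in\PSFALS{\xsol}{\TJ{\xsol}}$ and $G\in\PSFALS{\vsol}{\TG{\vsol}}$ force the Riemannian hessians to vanish (the vectors $e_J,e_G$ are locally constant), so $P=Q=0$, hence $U=V=\Id$ and $\BG=\PG$, $\CJ=\PJ$. Crucially, the $o(\cdot)$ remainder now disappears entirely: with constant subdifferential along the manifolds, the linearization is exact, so the recursion becomes the clean $\zkp-\zfix=M_{\lambda}(\zk-\zfix)$ with $\delta_k\equiv 0$ and $\lambda_k\equiv\lambda$. Then Lemma~\ref{lem:Mpow}(3) supplies both the limit projector onto $(\TJ{\xsol}\cap\TG{\vsol})\oplus(\SJ{\xsol}\cap\SG{\vsol})$ and the optimal rate $\rho=\sqrt{(1-\lambda)^2+\lambda(2-\lambda)\cos^2\theta_F(\TJ{\xsol},T_{\xsol}^{G})}$; applying $M_{\lambda}^{k-K}-M^\infty=(M_{\lambda}-M^\infty)^{k-K}$ and taking norms gives exactly \eqref{eq:rate}, with the trivial-intersection special case following from the last part of Lemma~\ref{lem:Mpow}(3).

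\medskip

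\textbf{The main obstacle} I anticipate is the bookkeeping in deriving that the linearized update operator is genuinely $M_{\lambda}$ rather than merely something with the same spectrum, and in verifying that the various $o(\cdot)$ terms---arising from both the proximity expansions and the $\lambda_k\to\lambda$ convergence---can be consistently gathered into $\delta_k$ and then summed so that $\sum_{j=K}^{k}M_{\lambda}^{k-j}\delta_j$ does not destroy the $O(\rho^k)$ estimate. Because $\delta_j$ is itself an $o$ of the errors at step $j$, one must argue (e.g. by a bootstrapping/Grönwall-type argument or by first establishing that the errors go to zero at least linearly) that the remainder is summable at the required geometric rate; this interplay between the nonlinear remainder and the linear contraction is the delicate point, and it is precisely why part (2), where the remainder vanishes identically, admits the sharp non-asymptotic inequality \eqref{eq:rate} while part (1) is only stated up to $O(\rho^k)$.
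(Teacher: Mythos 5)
Your proposal follows essentially the same route as the paper's proof: after identification (Theorem~\ref{thm:genident}) you linearize the proximal steps via Proposition~\ref{prop:proxps} and Lemma~\ref{lem:riemhesspsd}, obtain $\zkp-\zfix = M_{\lambda}(\zk-\zfix)+\delta_k$ with $M_{\lambda_k}-M_{\lambda}$ absorbed into $\delta_k$, unroll and invoke Lemma~\ref{lem:Mpow}(1)--(2) for part (1), and observe that local polyhedrality gives $U=V=\Id$ and $\delta_k\equiv 0$ so that Lemma~\ref{lem:Mpow}(3) yields the exact bound \eqref{eq:rate} in part (2). The remainder-summation issue you flag as the main obstacle is handled in the paper exactly as you anticipate, by bounding $\norm{\delta_j}=o\pa{\norm{z^j-\zsol}}$ via non-expansiveness of the proximity operators (its \eqref{eq:odelj}) and pairing this with the geometric decay of $\pa{M_{\lambda}-M^\infty}^{k-j}$ from Lemma~\ref{lem:Mpow}(2).
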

\begin{rem}
It can be observed that for the last statement, the best rate is obtained for $\lambda = 1$. This has been also pointed out in \cite{DemanetZhang13} for basis pursuit. This assertion is however only valid for the local convergence behaviour and does not mean in general that the DR will be globally faster for $\lambda_k \equiv 1$. Note also that the above result can be straightforwardly generalized to the case of varying $\lambda_k$.
\end{rem}
\begin{proof}
Since by assumption $\lambda_k \to \lambda \in ]0, 2[$ and $\Id - M_{\lambda}$ is non-expansive by Lemma~\ref{lem:Mpow}, we have
\[
\lim_{k \to \infty} \qfrac{\norm{(M_{\lambda_k}-M_{\lambda})(\zk-\zsol)}}{\norm{\zk-\zsol}}
= \lim_{k \to \infty} \qfrac{\abs{\lambda_k - \lambda}\norm{(\Id-M)(\zk-\zsol)}}{\norm{\zk-\zsol}}
\leq \lim_{k \to \infty} \abs{\lambda_k - \lambda} = 0  ,
\]
which means that ${(M_{\lambda_k}-M_{\lambda})(\zk-\zsol)} = o(\norm{\zk-\zsol})$ when $k$ is large enough.
\begin{enumerate}[label={\rm (\arabic{*})}]
\item We have
\[
\left\{
\begin{aligned}
\vkp &= \prox_{\gamma G}\pa{2\xk - \zk}  , \\
\xkp &= \prox_{\gamma J} \zkp  ,
\end{aligned}
\right.
~\Longleftrightarrow~
\left\{
\begin{aligned}
\pa{2\xk - \zk}  - \vkp &\in \gamma \partial G(\vkp) , \\
\zkp - \xkp &\in \gamma \partial J(\xkp)  ,
\end{aligned}
\right.
\]
and
\begin{equation*}
\zkp 
= \pa{1-\lambda_k}\zk+ \lambda_k \pa{\zk + \vkp - \xk} .
\end{equation*}
Thus, by Proposition~\ref{prop:proxps} we have,
\[
\PJ\pa{ \zk  - \xk } = \gamma e_{J}^{k} \qandq
\PJ\pa{ \zsol - \xsol } = \gamma e_{J}^{\star} ~,
\]
which, after using Theorem~\ref{thm:genident}(2)-(3) and Lemma~\ref{lem:riemhesspsd}, leads to
\[
\begin{aligned}
\pa{\xk-\xsol} + \gamma\pa{e_{J}^{k}-e_{J}^{\star}}
&= \pa{\Id+Q}\pa{\xk-\xsol} + o\pa{\norm{\xk-\xsol}} \\
&= \PJ\pa{\zk-\zsol}  .
\end{aligned}
\]
This yields
\[
\xk-\xsol
= \PJ(\xk-\xsol)
= \PJ V \PJ\pa{\zk-\zsol} + o\pa{\norm{\xk-\xsol}}.
\]
Similarly for $\vkp$, we have
\[
\begin{aligned}
\PG\pa{\vkp-\vsol} + \gamma\pa{e_{G}^{k+1} - e_{G}^{\star}}
&= 2\PG\pa{\xk - \xsol} - \PG\pa{\zk-\zsol}  \\
\Rightarrow~\pa{\Id + P}\pa{\vkp-\vsol} + o\pa{\norm{\vkp-\vsol}}
&= 2\PG\pa{\xk - \xsol} - \PG\pa{\zk-\zsol}  \\
\Rightarrow \qquad\qquad \pa{\vkp-\vsol} + o\pa{\norm{\vkp-\vsol}}
&= 2\PG U \PG\pa{\xk - \xsol} - \PG U \PG\pa{\zk-\zsol} .
\end{aligned}
\]
Therefore
\begin{align*}
\vkp-\vsol
&= 2\PG U \PG \PJ V\PJ\pa{\zk - \zsol} - \PG U \PG\pa{\zk-\zsol} \\
& \qquad + o\pa{\norm{\vkp-\vsol}} + o\pa{\norm{\xk-\xsol}} .
\end{align*}
For the fixed point iterates $\zk$, we have
\[
\begin{aligned}
&\pa{\zk + \vkp - \xk} - \pa{\zsol + \vsol - \xsol}  \\
=~& \pa{\zk - \zsol} + \pa{\vkp - \vsol} - \pa{\xk - \xsol}  \\
=~& \Pa{\Id + 2\PG U \PG \PJ V\PJ - \PG U \PG - \PJ V \PJ}\pa{\zk - \zsol} \\&\qquad + o\pa{\norm{\vkp-\vsol}} + o\pa{\norm{\xk-\xsol}}  \\
=~& M\pa{\zk - \zsol} + o\pa{\norm{\vkp-\vsol}} + o\pa{\norm{\xk-\xsol}} ~,
\end{aligned}
\]
which in turn yields
\[
\begin{aligned}
\zkp-\zsol 
&= M_{\lambda_k}\pa{\zk - \zsol} + o\pa{\norm{\vkp-\vsol}} + o\pa{\norm{\xk-\xsol}} \\
&= M_{\lambda}\pa{\zk - \zsol} + \Pa{M_{\lambda_k}-M_{\lambda}}\pa{\zk - \zsol} + o\pa{\norm{\vkp-\vsol}} + o\pa{\norm{\xk-\xsol}}  \\
&= M_{\lambda}\pa{\zk - \zsol} + \delta_k .
\end{aligned}
\]
From Lemma~\ref{lem:Mpow}(1), $M_{\lambda}$ is indeed convergent to $M^\infty$ as given there. We then have
\[
\begin{aligned}
&\pa{\zkp-\zsol} - M^{\infty}\pa{z^K-\zsol+\Delta_{k,K}}  \\
=~& \pa{M_{\lambda}^{k+1-K} - M^{\infty}}\pa{z^K-\zsol} + \msum_{j=K}^{k}\pa{M_{\lambda}^{k-j} - M^{\infty}} \delta_j \\
=~& \pa{M_{\lambda} - M^{\infty}}^{k+1-K} \pa{z^K-\zsol} + \msum_{j=K}^{k}\pa{M_{\lambda} - M^{\infty}}^{k-j} \delta_j ,
\end{aligned}
\]
whence we get
\[
\begin{aligned}
& \norm{\pa{\zkp-\zsol} - M^{\infty}\pa{z^K-\zsol+\Delta_{k,K}}}  \\
\leq~& \norm{\pa{M_{\lambda} - M^{\infty}}^{k+1-K}} \norm{z^K-\zsol} + \msum_{j=K}^{k}\norm{\pa{M_{\lambda} - M^{\infty}}^{k-j}} \norm{\delta_j} ~.
\end{aligned}
\]
We also observe that
\begin{align}
\norm{\delta_j}	&= o\Ppa{\norm{v^{j+1}-\xsol}+\norm{x^j-\xsol}+\norm{z^j-\zsol}} \nonumber\\
				&= o\Ppa{3\norm{z^j-\zsol}+\norm{z^j-\zsol}+\norm{z^j-\zsol}} \nonumber\\
\label{eq:odelj}
				&= o\Ppa{\norm{z^j-\zsol}} ,
\end{align}
where we used non-expansiveness of the proximity operator. Thus, using Lemma~\ref{lem:Mpow}(2) and \eqref{eq:odelj} proves the local linear convergence claim.

Under condition~\eqref{eq:condMinfz}, we have $M^\infty=0$, and the claim follows.

\item Now, we have $\delta_k=0$, $\forall k \in \NN$, and $M_\lambda$ is normal. It then follows from Lemma~\ref{lem:Mpow}(3) that
\begin{align*}
\norm{\pa{\zkp - \zfix} - M^\infty(z^{K} - \zfix)}
&= \norm{\pa{M_{\lambda}^{k+1-K}-M^\infty}\pa{z^{K} - \zfix}} \\
&= \norm{\pa{M_{\lambda}^{k+1-K}-M^\infty}\pa{\Id-M^\infty}(z^{K} - \zfix)} \\
&\leq \norm{M_{\lambda}^{k+1-K}-M^\infty}\norm{\pa{\Id-M^\infty}(z^{K} - \zfix)} \\
&= \norm{M_{\lambda}-M^\infty}^{k+1-K}\norm{\pa{\Id-M^\infty}(z^{K} - \zfix)} \\
&=\rho^{k+1-K} \norm{\pa{\Id-M^\infty}(z^{K} - \zfix)} \\
&\leq \rho^{k+1-K} \norm{z^{K} - \zfix},
\end{align*}
where $\rho$ is the optimal rate in Lemma~\ref{lem:Mpow}(3), and we have used the fact that $M_{\lambda}^kM_{\lambda}^\infty=M_{\lambda}^k\proj_{\Fix M_{\lambda}}=\proj_{\Fix M_{\lambda}}$, and $\Id-M_{\lambda}^{\infty}$ is an orthogonal projector, hence non-expansive.

The particular case is immediate. This concludes the proof.  \qedhere
\end{enumerate}
\end{proof}

\section{Sum of more than two functions}
\label{sec:DRmfunc}

We now want to tackle the problem of solving
\begin{equation}\label{eq:minPsum}
\min_{x \in \RR^n} \msum_{i=1}^m J_i(x)  ,
\end{equation}
where each $J_i \in \Gamma_0(\RR^n)$. We assume that all the relative interiors of their domains have a non-empty intersection, that the set of minimizers is non-empty, and that these functions are simple.

In fact, problem~\eqref{eq:minPsum} can be equivalently reformulated as \eqref{eq:minP} in a product space, see e.g. \cite{CombettesPesquet08,gfb2011}. Let $\bcH=\underset{\text{$m$ times}}{\underbrace{\RR^n \times \cdots \times \RR^n}}$ endowed with the scalar inner-product and norm 
\[
\forall \bmx, \bmy \in \bcH,~\bprod{\bmx}{\bmy} =\msum_{i=1}^m \dotp{x_i}{y_i},~\bnorm{\bmx} = \sqrt{\msum_{i=1}^m\norm{x_i}^2}.
\]
Let $\bcS = \enscond{\bmx=(x_i)_i\in\bcH}{x_1=\dotsm=x_m}$ and its orthogonal complement $\bcS^\perp = \Ba{\bmx=(x_i)_i\in\bcH: \sum_{i=1}^m x_i=0}$. Now define the canonical isometry,
\[
\bmC: \RR^n \to \bcS,~x\mapsto(x,\dotsm,x) ,
\]
then we have $\proj_{\bcS}(\bmz) = \bmC\!\Ppa{\tfrac{1}{m}\sum_{i=1}^m z_i}$.

Problem~\eqref{eq:minPsum} is now equivalent to
\begin{equation}\label{eq:minPsumprod}
\min_{\bmx \in \bcH} \bmJ(\bmx) + \bmG(\bmx) , ~~\mathrm{where}~~ \bmJ(\bmx) = \msum_{i=1}^m J_i(x_i) ~~\mathrm{and}~~ \bmG(\bmx) = \iota_{\bcS}(\bmx)  ,
\end{equation}
which has the form \eqref{eq:minP} on $\bcH$.

Obviously, $\bmJ$ is separable and therefore,
\[
\prox_{\gamma \bmJ}(\bmx) = \Pa{\prox_{\gamma J_i}(x_i)}_i  .
\]
Let $\bmx^\star = \bmC(\xsol)$. Clearly, $\bmG$ is polyhedral, hence partly smooth relative to $\bcS$ with $\bmT_{\bmx^\star}^{\bmG} =\bcS$.
Suppose that $J_i \in \PSF{\xsol}{\MmJi{\xsol}}$ for each $i$.
Denote $\TmJ{\bmx^\star} = \bigtimes_i \TJi{\xsol}$ and $\SmJ{\bmx^\star} = (\TmJ{\bmx^\star})^\perp = \bigtimes_i (\TJi{\xsol})^\perp$.
Similarly to \eqref{eq:Pk-P}, define 
\[
\bmQ = \gamma \bPJ \nabla^2 \widetilde{\bmJ}\pa{\xsol} \bPJ  \qandq
\bmV = \pa{\bId + \bmQ}^{-1}  ,
\] 
where $\widetilde{\bmJ}(\bmx) \eqdef \msum_{i=1}^m \widetilde{J}_i(x_i)$ is the smooth representation of $\bmJ$, and $\bId$ is the identity operatror on $\bcH$. Now we can provide the product space form of \eqref{eq:mtx-M}, where we recall that $\bmG$ is polyhedral,
\begin{equation}\label{eq:mtx-Mprodspace}
\begin{aligned}
\bmM 
&= \bId + 2\bPG  \bPJ \bmV \bPJ - \bPG  - \bPJ \bmV \bPJ  \\
&= \qfrac12\bId + \bPG \pa{2 \bPJ \bmV \bPJ - \bId} - \qfrac12\pa{2\bPJ \bmV \bPJ-\bId}  \\
&= \qfrac12\bId + \qfrac12\pa{2\bPG -\bId}\pa{2 \bPJ \bmV \bPJ - \bId}   ,
\end{aligned}
\end{equation}
and $\bmM_{\lambda} = (1-\lambda)\bId + \lambda \bmM$. Owing to Lemma \ref{lem:Mpow}, we have
\[
\bmM^\infty=\proj_{\ker\pa{\bPG\pa{\Id-\bPJ \bmV \bPJ}+\pa{\Id-\bPG}\bPJ \bmV \bPJ}}  .
\]

For the sake of simplicity, we fix $\lambda_k \equiv \lambda \in ]0, 2[$, and define
\[
\boldsymbol{\delta}_k \eqdef o\pa{\norm{\bmx^k-\bmx^\star}}  \qandq \boldsymbol{\Delta}_{k,K} \eqdef \msum_{j=K}^k \boldsymbol{\delta}_j  .
\]
hence we have the following result.

\begin{cor}
\label{cor:bpDRprodsum}
Suppose that the DR scheme is used to solve \eqref{eq:minPsumprod} and creates a sequence $(\bmz^k,\bmx^k,\bmv^k)$. Then $(\bmz^k,\bmx^k,\bmv^k)$ converges to $(\bmz^\star,\bmx^\star,\bmx^\star)$, and $\xsol$ is a minimizer of~\eqref{eq:minPsum}. Suppose that $J_i \in \PSF{\xsol}{\MmJi}$ and
\begin{equation}\label{eq:prodsumconditionnondeg}
\bmz^\star \in \bmx^\star + \gamma \ri\Pa{\partial \bmJ(\bmx^\star)}\cap \bcS^\bot  .
\end{equation} 
Then, 
\begin{enumerate}[label={\rm (\arabic{*})}]
\item the DR scheme has the finite activity identification property, i.e. for all $k$ sufficiently large, $\bmx^k \in \bigtimes_i \MmJi$.
\item Suppose that $J_i \in \PSFAL{\xsol}{\TJi{\xsol}}$, then given any $\rho \in ]\rho\pa{\bmM_{\lambda}-\bmM^\infty}, 1[$, there is $K$ large enough such that for all $k \geq K$,
\begin{equation*}
\norm{\pa{\bmz^k - \bmz^\star} - {M^\infty}\pa{\bmz^{K} - \bmz^\star + \boldsymbol{\Delta}_{k,K}}} = O\pa{\rho^{k}} .
\end{equation*}
%
\item Assume that $J_i \in \PSFLS{\xsol}{\TJi{\xsol}}$, then, there exists $K > 0$ such that for all $k \geq K$,
\begin{equation*}
\norm{\pa{\bmz^k - \bmz^\star} - \proj_{\pa{\TmJ{\xsol} \cap \bcS} \oplus \pa{\SmJ{\xsol} \cap \bcS^\perp}}\pa{\bmz^K - \bmz^\star}} \leq \rho^{k-K} \norm{\bmz^{K} - \bmz^\star}  ,
\end{equation*}
with $\rho = \sqrt{\pa{1-\lambda}^2+\lambda\pa{2-\lambda}\cos^2\theta_F\Pa{\TmJ{\xsol},\bcS}} \in [0,1[$, and thus, $\bmz^k - \bmz^\star$ converges locally linearly to $\proj_{\pa{\TmJ{\xsol} \cap \bcS} \oplus \pa{\SmJ{\xsol} \cap \bcS^\perp}}(\bmz^{K} - \bmz^\star)$ at the optimal rate $\rho$.
\end{enumerate}
\end{cor}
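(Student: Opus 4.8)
The plan is to observe that the product-space problem~\eqref{eq:minPsumprod} is \emph{literally} an instance of the two-function problem~\eqref{eq:minP}, now posed on $\bcH$ for the pair $(\bmJ,\bmG)$. Consequently the three claims should follow by specializing Theorems~\ref{thm:genident} and~\ref{thm:genlinear_rate} (and Lemma~\ref{lem:Mpow}) to this pair. The real content of the argument is therefore not any new estimate but (i) the verification that $(\bmJ,\bmG)$ meets the standing hypotheses of those theorems, and (ii) the tracking of how the fixed-point matrix $\bmM$ simplifies because of the very special structure of $\bmG=\iota_{\bcS}$.

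First I would dispatch the qualification hypotheses. Both $\bmJ$ and $\bmG=\iota_{\bcS}$ belong to $\Gamma_0(\bcH)$; the assumed non-empty intersection of the $\ri(\dom J_i)$ together with $\ri(\bcS)=\bcS$ gives $\ri(\dom\bmJ)\cap\ri(\dom\bmG)\neq\emptyset$, and non-emptiness of the minimizer set transfers from~\eqref{eq:minPsum}. For partial smoothness: since $\bmJ$ is a separable sum acting on disjoint coordinate blocks, partial smoothness of each $J_i$ at $\xsol$ relative to $\MmJi$ yields partial smoothness of $\bmJ$ at $\bmx^\star=\bmC(\xsol)$ relative to the product manifold $\bigtimes_i\MmJi$, with $\bmT_{\bmx^\star}^{\bmJ}=\TmJ{\bmx^\star}=\bigtimes_i\TJi{\xsol}$; the sharpness and continuity properties factor over the blocks so that no transversality condition is needed. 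The function $\bmG=\iota_{\bcS}$ is an indicator of a subspace, hence locally polyhedral and in $\PSFLS{\bmx^\star}{\bcS}$ with $\bmT_{\bmx^\star}^{\bmG}=\bcS$. Finally, since $N_{\bcS}(\bmx^\star)=\bcS^\perp$ is a subspace, the non-degeneracy condition~\eqref{eq:conditionnondeg} reads $\bmz^\star\in\bmx^\star+\gamma\Pa{\ri(\partial\bmJ(\bmx^\star))\cap\bcS^\perp}$, which is exactly~\eqref{eq:prodsumconditionnondeg}.

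With the hypotheses in place the three statements follow directly. Claim~(1) is Theorem~\ref{thm:genident}(1) applied to $(\bmJ,\bmG)$. For claim~(2), the key simplification is that the $C^2$-representative of $\bmG$ on $\bcS$ is identically zero, so $\nabla^2\widetilde{\bmG}(\bmx^\star)=0$, the product-space analogue of $P$ vanishes, and $\bmU=\bId$; this is precisely why $\bmM$ collapses to the form displayed in~\eqref{eq:mtx-Mprodspace}, written through $\bPG=\proj_{\bcS}$ and $\bPJ\bmV\bPJ$. Claim~(2) is then Theorem~\ref{thm:genlinear_rate}(1) with this $\bmM$ and with $\bmM^\infty$ as recorded after~\eqref{eq:mtx-Mprodspace}. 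For claim~(3), if moreover every $J_i\in\PSFLS{\xsol}{\TJi{\xsol}}$, then $\bmJ$ is itself locally polyhedral, so $\bmQ=0$, $\bmV=\bId$, and $\bmM=\bPG\bPJ+\bPGs\bPJs$ is normal; Lemma~\ref{lem:Mpow}(3) together with Theorem~\ref{thm:genlinear_rate}(2) then delivers local linear convergence to $\proj_{\pa{\TmJ{\xsol}\cap\bcS}\oplus\pa{\SmJ{\xsol}\cap\bcS^\perp}}$ at the optimal rate governed by the Friedrichs angle $\theta_F\Pa{\TmJ{\bmx^\star},\bcS}$.

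The step demanding the most care—rather than any genuine difficulty—is the partial smoothness of the separable sum $\bmJ$ and the correct identification of its product tangent space $\TmJ{\bmx^\star}=\bigtimes_i\TJi{\xsol}$, since everything downstream (the block-diagonal structure of $\bmQ$, hence of $\bmV$ and $\bmM$) rests on it. Once this is secured, the remainder is essentially bookkeeping that transcribes the two-function results into the product notation, the only genuinely ``structural'' input being that $\bmG=\iota_{\bcS}$ is polyhedral with vanishing Riemannian gradient and hessian.
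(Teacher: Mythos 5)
Your proposal is correct and follows essentially the same route as the paper: the paper likewise verifies that the pair $(\bmJ,\bmG)$ fits the two-function framework --- invoking the separability rule \cite[Proposition~4.5]{Lewis-PartlySmooth} for partial smoothness of $\bmJ$ relative to $\bigtimes_i \MmJi$, noting $\partial\bmG(\bmx^\star)=N_{\bcS}(\bmx^\star)=\bcS^\perp$ so that \eqref{eq:prodsumconditionnondeg} is exactly the specialization of \eqref{eq:conditionnondeg} --- and then deduces the three claims directly from Theorem~\ref{thm:genident}(1) and the two parts of Theorem~\ref{thm:genlinear_rate}. Your extra bookkeeping (vanishing Riemannian hessian of $\iota_{\bcS}$, hence the collapse of $\bmM$ to the form \eqref{eq:mtx-Mprodspace}, and $\bmQ=0$, $\bmV=\bId$ in the locally polyhedral case) is precisely what the paper carries out in the discussion preceding the corollary rather than inside its proof.
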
 
\begin{proof}
{~}\\\vspace*{-0.5cm}
\begin{enumerate}[label={\rm (\arabic{*})}]
\item By the separability rule, $\bmJ \in \PSF{\bmx^\star}{\bigtimes_{i} \MmJi_{\xsol}}$, see \cite[Proposition~4.5]{Lewis-PartlySmooth}. 
We also have $\partial \bmG(\bmx^\star) = N_{\bcS}(\bmx^\star)=\bcS^\bot$. 
Then \eqref{eq:prodsumconditionnondeg} is simply a specialization of condition~\eqref{eq:conditionnondeg} to problem~\eqref{eq:minPsumprod}. The claim then follows from Theorem~\ref{thm:genident}(1).
\item This is a direct consequence of Theorem~\ref{thm:genlinear_rate}(2). 
\item This is a direct consequence of Theorem~\ref{thm:genlinear_rate}(3).  \qedhere
\end{enumerate}
\end{proof}

\section{Numerical experiments}
\label{sec:DRapp}

Here, we illustrate our theoretical results on several concrete examples. This section is by no means exhaustive, and we only focus on the problems that we consider as representative in variational signal/image processing.

\paragraph{Affinely-constrained Minimization}
Let us now consider the affine-constrained minimization problem
\begin{equation}\label{eq:minbp}
  \min_{x \in \RR^n} J(x) \qsubjq y = A x  ,
\end{equation}
where $A \in \RR^{m \times n}$.
We assume that the problem is feasible, i.e. the observation $y \in \Im(A)$. By identifying $G$ with the indicator function of the affine constraint, it is immediate to see that $G=\iota_{\Ker(A)}(\cdot)$, which is polyhedral and is simple (i.e. the corresponding projector can be easily computed). 

Problem~\eqref{eq:minbp} is of important interest in various areas, including signal and image processing to find regularized solutions to linear equations. Typically, $\J$ is a regularization term intended to promote solutions conforming to some notion of simplicity/low-dimensional structure. One can think of instance of the active area of compressed sensing (CS) and sparse recovery. 

We here solve~\eqref{eq:minbp} with $J$ being either $\ell_1$ (Lasso), $\ell_\infty$ (anti-sparsity), and $\ell_{1,2}$-norm (group Lasso).
For all these cases, $J \in \Gamma_0(\RR^n)$, is simple, and is partly smooth relative to a subspace $T_{\xsol}^J$ that can be easily computed, see e.g.~\cite{vaiter2013model}. In fact, in the first two examples, $J$ are polyhedral while $\ell_{1,2}$-norm is not. In these experiments, $A$ is drawn randomly from the standard Gaussian ensemble, i.e. CS scenario, with the following settings:
\begin{enumerate}[label = (\alph{*}), leftmargin=2.5em]
\item {$\ell_1$-norm:} $m=32$ and $n=128$, $x_0$ is $8$-sparse;
\item {$\ell_{1,2}$-norm:} $m=32$ and $n=128$, $x_0$ has $3$ non-zero blocks of size $4$;
\item {$\ell_\infty$-norm:} $m=120$ and $n=128$, $x_0$ has $10$ saturating entries;
\end{enumerate}
For each setting, the number of measurements is sufficiently large so that one can prove that the minimizer $\xsol$ is unique, and in particular that $\ker(A) \cap T_{\xsol} = \ens{0}$ (with high probability). We also checked that $\Im(A^T) \cap S_{\xsol} = \ens{0}$, which is in this case equivalent to uniqueness of the fixed point (see Remark~\ref{rem:fixBPDR}(ii)). Thus \eqref{eq:conditionnondeg} is obviously fulfilled, and the second part of Theorem~\ref{thm:genlinear_rate} applies.

Figure~\ref{fig:DR1}(a)-(c) displays the global profile of $\norm{\zk - \zfix}$ as a function of $k$, and the starting point of the solid line is the iteration number at which the partial smooth manifolds (here subspaces) are identified.
One can easily see that for $\ell_1$, $\ell_\infty$-norms,  the linear convergence behaviour and that our rate estimate is indeed optimal. For the case of $\ell_{1,2}$-norm, though not optimal, our estimate is rather tight.

\begin{figure}[!ht]
\centering
\subfigure[CS $\ell_1$-norm]{\includegraphics[width=0.32\textwidth]{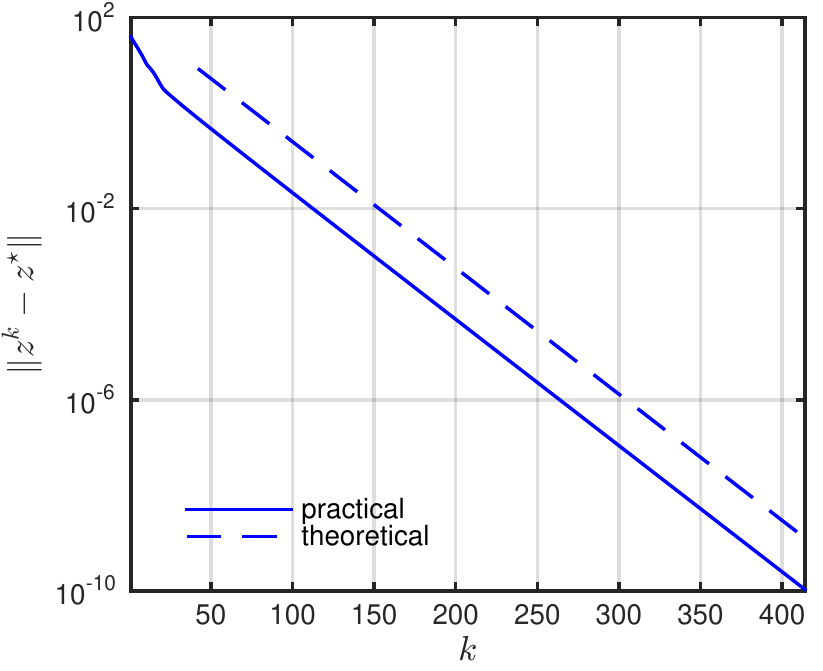}}
\subfigure[CS $\ell_{1,2}$-norm]{\includegraphics[width=0.32\textwidth]{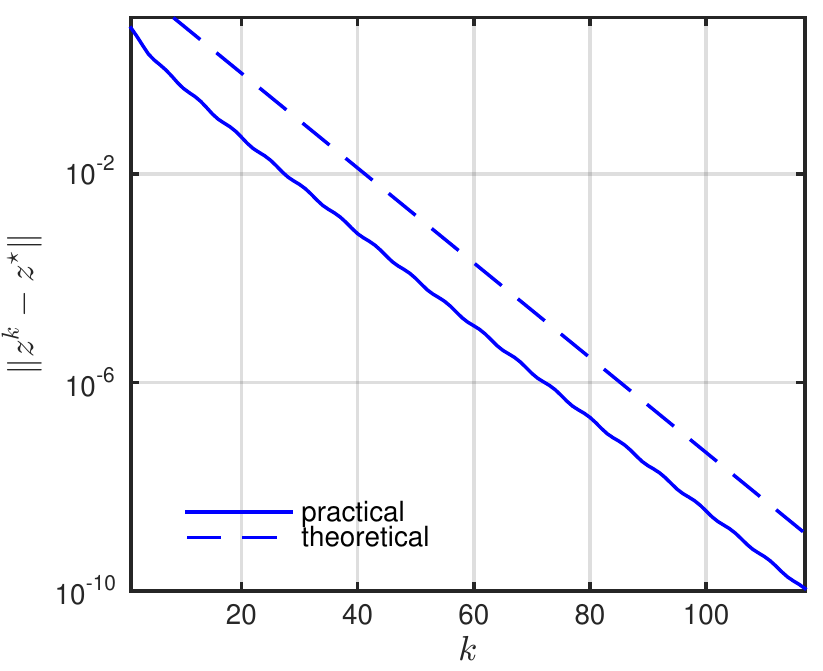}} 
\subfigure[CS $\ell_\infty$-norm]{\includegraphics[width=0.32\textwidth]{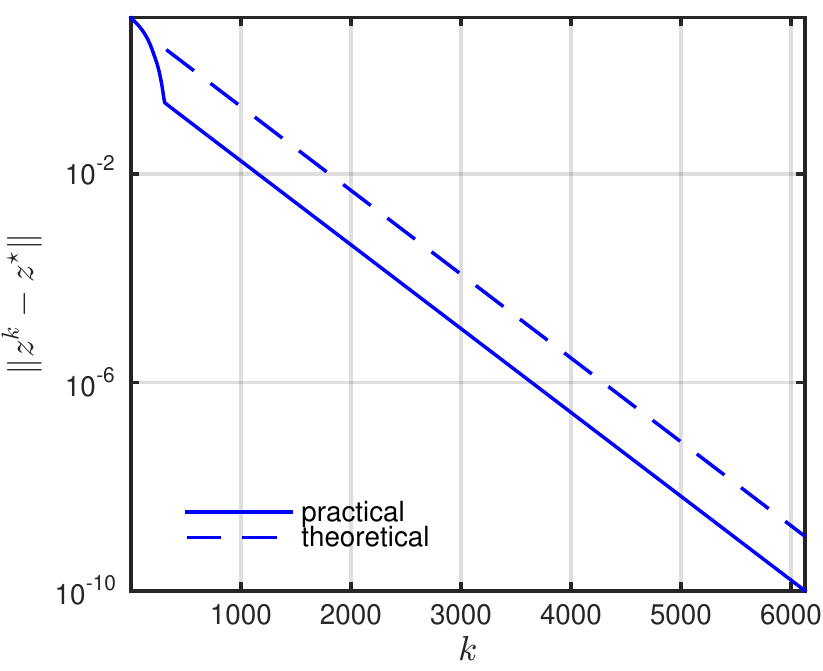}}  \\
\subfigure[TV image inpainting]{\includegraphics[width=0.32\textwidth]{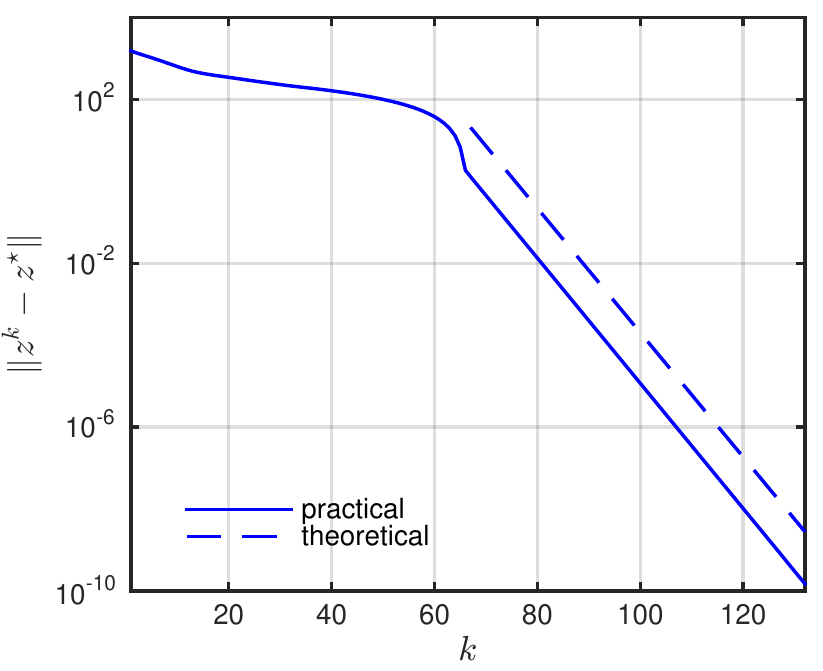}}
\subfigure[Uniform noise removal]{\includegraphics[width=0.32\textwidth]{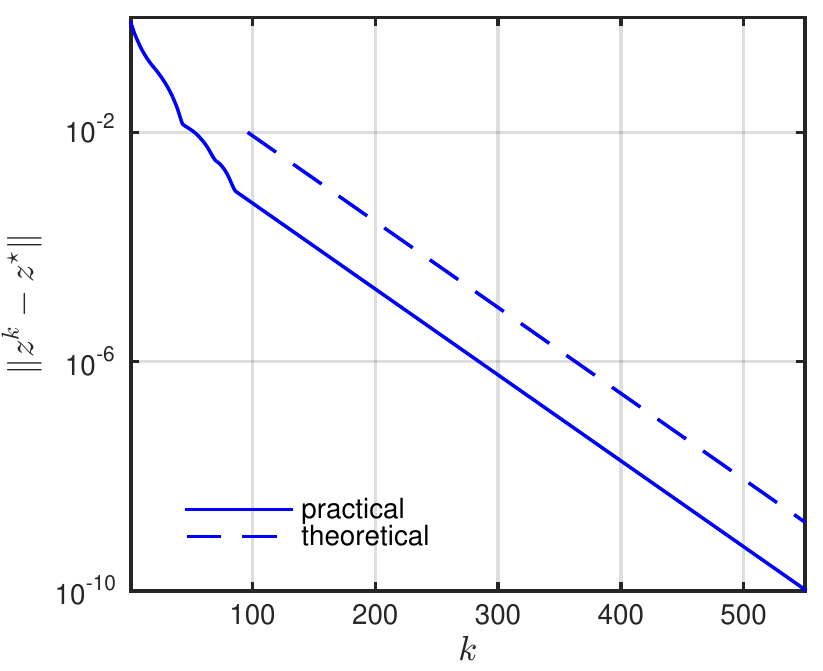}}
\subfigure[Outliers removal]{\includegraphics[width=0.32\textwidth]{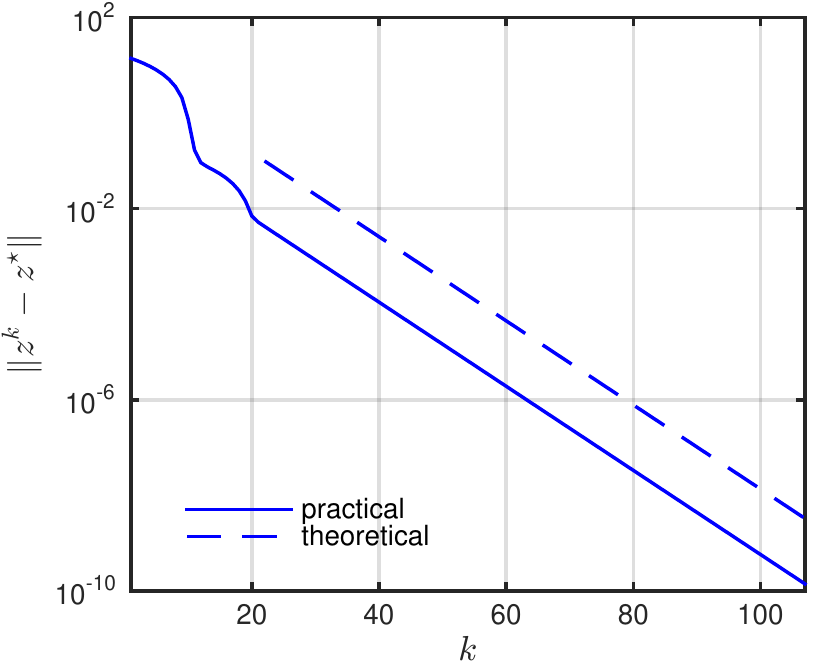}}
\caption{Observed (dashed) and predicted (solid) convergence profiles of DR \eqref{eq:drsgen} in terms of $\norm{\zk - \zfix}$. (a) CS with $\ell_1$. (b) CS with $\ell_{1,2}$. (c) CS with $\ell_\infty$. (d) TV image inpainting. (e) Uniform noise removal by solving \eqref{eq:mintvlinf}. (f) Outliers removal by solving \eqref{eq:mintvl1}. The starting point of the solid line is the iteration at which the manifolds are identified.}
\label{fig:DR1}
\end{figure}

\paragraph{TV based Image Inpainting}
In this image processing example, we observe $y=Ax_0$, where $A$ is a binary mask operator. We aim at inpainting the missing regions from the observations $y$. This can be achieved by solving \eqref{eq:minbp} with $J$ the 2D anisotropic TV.
The corresponding convergence profile is depicted in Figure~\ref{fig:DR1}(d).

\paragraph{Uniform Noise Removal}
For this problem, we assume that we observe $y = x_0 + \epsilon$, where $x_0$ is a piecewise-smooth vector, and $\epsilon$ is a realization of a random vector whose entries are iid $\sim \Uu([-a,a])$, $a > 0$. It is then natural to solve the problem
\begin{equation}\label{eq:mintvlinf}
\min_{x \in \RR^n} \norm{x}_{\mathrm{TV}} \qsubjq \norm{y - x}_{\infty} \leq a  .
\end{equation}
$G$ is now identified with the indicator function of the $\ell_\infty$-ball constraint, which is polyhedral and simple. The local convergence profile is shown in Figure~\ref{fig:DR1}(e) where we set $a=1$ and $n=100$. 

\paragraph{Outliers Removal}
Consider solving 
\begin{equation}\label{eq:mintvl1}
\min_{x \in \RR^n} \norm{y - x}_{1} + \lambda\norm{x}_{\mathrm{TV}} , 
\end{equation}
where $\lambda > 0$ is the tradeoff parameter. This problem has been proposed by~\cite{Nikolova04} for outliers removal. We take $J=\lambda\norm{\cdot}_{\mathrm{TV}}$ and $G=\norm{y - \cdot}_{1}$, which is again simple and polyhedral. For this example we have $n=100$, and $y-x$ is $10$-sparse, the corresponding local convergence profile is depicted in Figure~\ref{fig:DR1}(f).

\section{Conclusion}\label{sec:conclusion}
In this paper, we first showed that the DR splitting has the finite manifold identification under partial smoothness. When the involved manifolds are affine/linear, we proved local linear convergence of DR. When the involved functions are locally polyhedral, the optimal convergence rate is established. This is confirmed and illustrated by several numerical experiments. 

\section*{Acknowledgments}
This work has been partly supported by the European Research Council (ERC project SIGMA-Vision). JF was partly supported by Institut Universitaire de France.


\bibliographystyle{plain}
\bibliography{dr-arxiv}

\end{document}